\documentclass[12pt,reqno]{amsart}
\usepackage{mathrsfs}
\usepackage[T1]{fontenc}
\usepackage{amsmath}
\usepackage{amsthm}
\usepackage{amsfonts}
\usepackage{amssymb, amsmath}
\newtheorem{prop}{Proposition}
\newtheorem{cor}{Corollary}
\theoremstyle{definition}
\newtheorem{defn}{Definition}
\theoremstyle{plain}
\newtheorem{lmm}{Lemma}
\newtheorem{thm}{Theorem}

\newcommand{\R}{\mathbb{R}} 
\newcommand{\Z}{\mathbb{Z}}

\newcommand{\1}{\mathbf{1}} 
\newcommand{\ud}{\mathrm{d}} 
\theoremstyle{remark}

\begin{document}

\title[]
{Bounds on Sobolev norms for the nonlinear Schr\"{o}dinger equation on general tori}

\author{F. Catoire}
\address{Departement de Mathematique, Universite Paris Sud, 91405 Orsay Cedex, FRANCE }
\email {fabrice.catoire@math.u-psud.fr}

\author{W.-M. Wang}
\address{Departement de Mathematique, Universite Paris Sud, 91405 Orsay Cedex, FRANCE} 
\email{wei-min.wang@math.u-psud.fr}

\thanks{We thank P. G\'erard and N. Burq for informative conversations. W.-M. Wang also thanks J. Bourgain and L. Clozel for helpful discussions.}

\date{}

\keywords{nonlinear Schr\"odinger equations, Strichartz estimates, Sobolev norms}

\subjclass{35xx, 42xx}

\begin{abstract} We prove Strichartz estimates on general flat $d$-torus for arbitrary $d$.
Using these estimates, we prove local wellposedness for the cubic nonlinear Schr\"odinger 
equations in appropriate Sobolev spaces. In dimensions $2$ and $3$, we prove polynomial
bounds on the possible growth of Sobolev norms of smooth solutions.  
\end{abstract}

\maketitle

\section {Introduction}

We consider the cubic nonlinear Schr\"odinger equation (NLSE) on the general flat $d$-torus $\Bbb T^d$:
\begin{eqnarray}
i \partial_t u + \Delta u = |u|^2u,\nonumber\\
u(0,x) = u_0(x),\label{Samba}
\end{eqnarray}
where $$\Bbb T^d:=\Bbb R^d/\prod_{j=1}^d\alpha_j\Bbb Z,\, 1/2\leq\alpha_j\leq 2\,, j=1,...,d,$$
and $u$ the solution to the Cauchy problem (\ref{Samba}) is a complex valued function on $\Bbb R\times\Bbb T^d$. The $L^2$ norm of $u$ is conserved. If $u_0\in H^1$, then $\|u\|_{H_1}$ is uniformly bounded in time.  

We say that the Cauchy problem is {\it uniformly (smoothly)
wellposed} in $H^s$ when, for any $R>0$, there exists $T>0$ and a Banach
space $X_T$ embedded in $\mathcal{C}^0([0,T],H^s)$ such that :

$\bullet$ for any $u_0 \in H^s$ such that $\|u_0\|_{H^s} \leq R$, (\ref{Samba}) has a
unique solution $u \in H^s$ 

$\bullet$  if $u_0 \in H^{s'}$ with $s'>s$ then $u \in \mathcal{C}^0([0,T],H^{s'})$

$\bullet$  the map $u_0 \mapsto u$ is uniformly continuous ($\mathcal{C}^\infty$).

\smallskip
In this paper, we prove the following two results.

\begin{thm}
Let $\Bbb T^d$ be a general $d$-torus. Then the cubic Schr\"odinger equation (1) is locally well posed in 
$H^s$ for any $s$ satisfying 
\begin{eqnarray*}
s&>&\frac{1}{3},\qquad\qquad\qquad d =2, \\
    &>&\frac{d}{2}-\frac{d}{d+1},\quad\quad\, \textrm{ $d\geq3$, d odd,} \\
    &>&\frac{d}{2}-1, \qquad\qquad\,\,\textrm{$d\geq4$, d even.} 
\end{eqnarray*}
\end{thm}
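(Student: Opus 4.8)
The plan is to set up a Picard iteration for the Duhamel reformulation of (\ref{Samba}),
$$u(t)=e^{it\Delta}u_0-i\int_0^t e^{i(t-\tau)\Delta}\bigl(|u|^2u\bigr)(\tau)\,\ud\tau,$$
in a Bourgain space $X^{s,b}_T$ over the slab $[0,T]\times\T^d$ with $b$ slightly larger than $1/2$. The iteration closes once three estimates are available. Two of them are the standard, geometry-independent $X^{s,b}$ bounds: the free bound $\|e^{it\Delta}u_0\|_{X^{s,b}_T}\lesssim\|u_0\|_{H^s}$, and the retarded bound $\bigl\|\int_0^t e^{i(t-\tau)\Delta}F\,\ud\tau\bigr\|_{X^{s,b}_T}\lesssim T^{1-b-b'}\|F\|_{X^{s,-b'}_T}$ for fixed $0\le b'<\tfrac12<b$ with $b+b'<1$, the positive power of $T$ being what makes the map contractive on a short time interval. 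The third, carrying all the analytic content, is the trilinear estimate
$$\bigl\||u|^2u\bigr\|_{X^{s,-b'}_T}\lesssim\|u\|_{X^{s,b}_T}^{3},$$
whose proof rests entirely on the Strichartz estimates established in this paper, used both in the Littlewood--Paley form $\|e^{it\Delta}P_Nf\|_{L^p([0,1]\times\T^d)}\lesssim N^{\sigma_d(p)+\epsilon}\|f\|_{L^2}$ and through their transference $\|P_Nu\|_{L^p([0,1]\times\T^d)}\lesssim N^{\sigma_d(p)+\epsilon}\|P_Nu\|_{X^{0,b}}$, valid for $b>1/2$. Granting the trilinear estimate, the map $u\mapsto e^{it\Delta}u_0-i\int_0^t e^{i(t-\tau)\Delta}|u|^2u$ is a contraction on a ball of $X^{s,b}_T$ provided $T$ is small depending only on $\|u_0\|_{H^s}$; uniqueness, the uniformly continuous (in fact real-analytic) dependence $u_0\mapsto u$, and the propagation of $H^{s'}$-regularity for $s'>s$ then follow by the usual soft arguments from the multilinear nature of this estimate.

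Thus the whole theorem reduces to the trilinear estimate. I would prove it by duality — bounding $\bigl|\int_{[0,T]\times\T^d}u_1\bar u_2\,u_3\,\bar g\bigr|$ over $g\in X^{-s,b'}_T$ of unit norm — after a dyadic Littlewood--Paley decomposition of each factor at frequencies $N_1,N_2,N_3,N_4$. By frequency balance the two largest of the $N_i$ are comparable, and the analysis splits according to which pair carries them; in every case one distributes the four factors among $L^p$, $L^{p'}$ and $L^2$ norms in $(t,x)$ by H\"older, applies the transferred Strichartz inequality to the factors carrying the $X^{0,b}$ weight, uses the elementary $L^2$ bound on the remaining factor (since $b'<1/2$, $g$ has too little modulation weight to be transferred into $L^p$ and must be placed in $L^2$, at the cost of a power of its frequency), and then collects the powers of the $N_i$. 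Convergence of the resulting dyadic sums requires the net power of the largest frequency to be negative, and optimizing over the admissible Strichartz exponents $p$ turns this condition into exactly the stated restriction on $s$. The dichotomy in the statement — even $d\ge4$ versus the remaining cases ($d$ odd, and $d=2$, all obeying the single formula $s>\tfrac d2-\tfrac d{d+1}$) — is precisely the dichotomy in the quality of the Strichartz estimates of this paper: when $d$ is even and $\ge4$ the lattice-point counting on the relevant level sets is more favorable (an even number of variables), sharp enough to push the trilinear estimate down to the scaling-critical exponent $s=\tfrac d2-1$, whereas in the other dimensions only a weaker exponent is available and the argument stalls $\tfrac1{d+1}$ derivatives above scaling — at $s>\tfrac13$ when $d=2$.

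The step I expect to be the genuine obstacle is the regime of comparable high frequencies, where there is no derivative to trade between the two large factors: there a crude combination of H\"older with the single-function Strichartz estimate loses too much and pushes the admissible $s$ above the claimed threshold. To recover the sharp range one needs either an improved \emph{bilinear} Strichartz estimate of the schematic form
$$\|e^{it\Delta}P_Nf\cdot e^{it\Delta}P_Mh\|_{L^2([0,1]\times\T^d)}\lesssim G\!\left(\tfrac MN\right)N^{\epsilon}\|f\|_{L^2}\|h\|_{L^2},\qquad M\le N,$$
exhibiting a gain $G(M/N)\to0$ as $M/N\to0$ from the transversality of the two caps of the paraboloid, or a careful bookkeeping of which large pair meets which small factor; on a \emph{general}, and in particular irrational, torus this is delicate because the clean modular-forms and sum-of-squares arithmetic available on rational tori is gone, so any such gain must be wrung out of the robust lattice-point geometry underlying the proof of the linear Strichartz estimates, at the price of the $\epsilon$. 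Finally — and this is why every inequality in the theorem is strict in $s$, with no endpoint claimed — the $N^{\epsilon}$ losses in the Strichartz inputs must ultimately be absorbed by the dyadic decay that one gains only by taking $s$ strictly above threshold.
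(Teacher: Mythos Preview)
Your overall architecture matches the paper's: a contraction in $X^{s,b}_T$ via the free and retarded linear estimates together with a trilinear bound $\|u_1\overline{u_2}u_3\|_{X^{s,-b'}}\lesssim\prod\|u_j\|_{X^{s,b}}$, proved by duality and dyadic decomposition. The paper packages this as Proposition~2 (a bilinear $L^2$ estimate) fed into Proposition~3 (the abstract well-posedness machine), and Theorem~1 is then a one-line consequence.

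Where your route diverges is in the \emph{mechanism} for the bilinear gain. You anticipate having to extract it from transversality of caps on the paraboloid, and you flag this as delicate on an irrational torus. The paper sidesteps this entirely: it observes that the $L^4$ Strichartz bound of Proposition~1 holds uniformly for functions with spectrum in any translate $a+[-N,N]^d$, then decomposes the high-frequency factor $f_2$ into cubes of side $N_1=\min(N_1,N_2)$, uses almost orthogonality of the products $e^{it\Delta}f_1\cdot e^{it\Delta}f_2^{(i)}$, and applies H\"older plus Proposition~1 on each cube. This gives $\|e^{it\Delta}f_1\,e^{it\Delta}f_2\|_{L^2}\lesssim\min(N_1,N_2)^{s_0}\|f_1\|_{L^2}\|f_2\|_{L^2}$ directly, with no curvature argument and no additional lattice-point geometry beyond what already went into the $L^4$ bound. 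In particular, your ``genuine obstacle'' --- the comparable-high-frequency regime --- dissolves once the four-linear form is paired into two bilinear factors and one reads off the product of the two smallest $N_j$ to the power~$s_0$.

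Two smaller points. First, the paper never optimizes over Strichartz exponents~$p$; it works exclusively at $p=4$, and the exponents $s_0$ in the statement are exactly the $L^4$ exponents from Proposition~1. Second, to place the dual factor in $X^{-s,b'}$ with $b'<\tfrac12$, the paper does not put $g$ crudely in $L^2_{t,x}$ as you suggest; instead it interpolates the transferred four-linear estimate (which needs $b>\tfrac12$) against a cruder Sobolev-embedding estimate with weight $X^{0,1/4}$ (Lemma~6), producing a four-linear bound valid for some $b'<\tfrac12$ at the cost of replacing $s_0$ by any $s>s_0$ (Lemma~7). The genuinely off-diagonal terms $N_4\gg N_3$ are handled separately by an eigenfunction-orthogonality lemma (Lemma~10). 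These are the technical steps your sketch leaves implicit.
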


\begin{thm}
For $d\leq 3$, the cubic Schr\"odinger equation (1) is globally well posed in $H^s$ for 
$s\geq 1$. Moreover if $u_0\in H^s$ ($s\geq 1$), then 
$$
\|u(t)\|_{H^s} \lesssim t^{A(s-1)},
$$
for every $A$ such that 
\begin{eqnarray*}
A&>&\frac{3}{2},\quad d=2, \\
       &>& \frac{15}{2}, \,\,\,d=3. \\
\end{eqnarray*}
\end{thm}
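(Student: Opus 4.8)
The plan is to treat the two assertions of Theorem 2 in turn, the first being routine and the second carrying the real content.

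\emph{Global wellposedness.} Since $1>\tfrac{1}{3}$ when $d=2$ and $1>\tfrac{3}{4}=\tfrac{d}{2}-\tfrac{d}{d+1}$ when $d=3$, Theorem 1 already gives local wellposedness at regularity $H^1$, with a local time $\delta$ that depends only on $\|u_0\|_{H^1}$. The equation is defocusing, so the Hamiltonian $E(u)=\int_{\mathbb{T}^d}\big(|\nabla u|^2+\tfrac{1}{2}|u|^4\big)$ and the mass $\|u\|_{L^2}^2$ are conserved and together bound $\|u(t)\|_{H^1}$ for all $t$; hence $\delta\geq c>0$ is a fixed step and the local solution extends to a global one. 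Since the cubic problem is $H^1$-subcritical for $d\leq 3$, persistence of regularity holds with the same uniform time window, so one gets global wellposedness in $H^s$ for every $s\geq 1$.

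\emph{The polynomial bound.} On a slice $I=[t_0,t_0+\delta]$, differentiate $\||\nabla|^s u\|_{L^2}^2$ and use the equation: the dispersive term drops ($\langle\Delta|\nabla|^s u,|\nabla|^s u\rangle$ is real), and after commuting $|\nabla|^s$ through $|u|^2$ the term $|u|^2|\nabla|^s u$ also drops (it pairs to a real quantity), so only a commutator survives:
\[
\Big|\,\|u(t_0+\delta)\|_{\dot H^s}^2-\|u(t_0)\|_{\dot H^s}^2\,\Big|\;\leq\;2\int_I\big|\big\langle\,\mathcal R(u),\,|\nabla|^s u\,\big\rangle\big|\,dt,\qquad \mathcal R(u)=|\nabla|^s\!\big(|u|^2u\big)-|u|^2|\nabla|^s u .
\]
The gain is built into $\mathcal R$: on frequencies $\xi=\xi_1-\xi_2+\xi_3$ one has $\big||\xi|^s-|\xi_3|^s\big|\lesssim(|\xi_1|+|\xi_2|)(|\xi_1|+|\xi_2|+|\xi_3|)^{s-1}$, so at most $s-1$ derivatives ever fall on the highest frequency while one extra derivative is distributed on the two low ones. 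Expanding $\int_I\langle\mathcal R(u),|\nabla|^s u\rangle\,dt$ as a quadrilinear space--time integral, estimating it with the bilinear and $L^4$-type Strichartz inequalities established for Theorem 1, using the fixed-point bound $\|u\|_{X^{s,b}(I)}\lesssim\|u(t_0)\|_{H^s}$ (valid on the same $\delta$ by $H^1$-subcriticality), and interpolating the intermediate norm $H^{s-1}$ between $H^1$ and $H^s$, one should arrive at
\[
\Big|\,\|u(t_0+\delta)\|_{H^s}^2-\|u(t_0)\|_{H^s}^2\,\Big|\;\leq\;C\big(\|u_0\|_{H^1}\big)\,\|u(t_0)\|_{H^s}^{\,2-\beta}
\]
for some gain $\beta=\beta(s,d)>0$ (the $L^2$ part being conserved, the full $H^s$ increment equals the $\dot H^s$ one).

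Granting this increment, the conclusion is soft: with $a_n=\|u(n\delta)\|_{H^s}^2$ one has $a_{n+1}\leq a_n+Ca_n^{1-\beta/2}$, hence $a_{n+1}^{\beta/2}\leq a_n^{\beta/2}\big(1+Ca_n^{-\beta/2}\big)^{\beta/2}\leq a_n^{\beta/2}+\tfrac{\beta}{2}C$, so $a_n\lesssim n^{2/\beta}$, and since $n\simeq t/\delta\simeq t$ this is $\|u(t)\|_{H^s}\lesssim t^{1/\beta}$ with $A(s-1)=1/\beta$. The main obstacle, and where I expect essentially all of the work, is the quantitative increment: extracting a \emph{genuine} $\beta>0$ from the quadrilinear integral using only the Strichartz estimates available on a \emph{general} torus, which carry an $\varepsilon$-loss (and a worse one in $d=3$) for lack of any arithmetic structure. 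The derivative gained from the commutator has to beat those losses with room to spare, and optimizing this trade-off is what pins down the exponents --- for $d=2$ the loss is governed by the $H^{1/3}$ local threshold and yields $A>\tfrac{3}{2}$, while for $d=3$ the weaker trilinear and $L^4$-type bounds only allow $A>\tfrac{15}{2}$. A secondary point to verify is that every local-in-time estimate really holds on a step independent of $\|u(t_0)\|_{H^s}$; this is exactly where the conservation laws and the subcriticality of the cubic nonlinearity enter.
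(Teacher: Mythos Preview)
Your outline matches the paper's strategy closely: differentiate a high Sobolev norm, drop the real contributions, bound the rest by $\|u\|_{H^s}^{2-\beta}$ on a time step depending only on $\|u_0\|_{H^1}$, and iterate. The endgame (the discrete Gronwall giving $t^{1/\beta}$) is exactly right, and your remarks on subcriticality and the uniform local time are on point.

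There is, however, a genuine gap in the commutator step. Your symbol bound
\[
\big||\xi|^s-|\xi_3|^s\big|\;\lesssim\;(|\xi_1|+|\xi_2|)\,(|\xi_1|+|\xi_2|+|\xi_3|)^{s-1}
\]
is correct, but the conclusion ``at most $s-1$ derivatives ever fall on the highest frequency'' is \emph{not}: if the high frequency sits on the conjugated factor $\bar u$ (i.e.\ $|\xi_2|$ dominates), then $|\xi_1|+|\xi_2|$ is itself large and the bound gives back the full $|\xi_2|^s$. In that configuration $|\xi|=|\xi_1-\xi_2+\xi_3|$ is comparable to $|\xi_2|$, so after pairing with $|\nabla|^s\bar u$ you recover precisely the term $\int u^2\,(|\nabla|^s\bar u)^2$. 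The paper singles this out as ``the most difficult'' contribution, and the commutator alone yields \emph{no} gain on it; if you stop here, $\beta=0$.

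To extract a positive $\beta$ from this term the paper goes well beyond the spatial bilinear/Strichartz estimates you invoke: it performs a further dyadic decomposition in the \emph{modulation} variable $\tau+\lambda^2$ (the $L_j$'s), splits according to whether the low frequencies satisfy $N_2\ge N_1^{\delta}$ or not, and in the latter case exploits the constraint $L_0+L_1\gtrsim N_1^2$ forced by the frequency/time-frequency relation to trade modulation for spatial regularity. Optimising the parameter $\delta$ against $s_0$ is exactly what produces the gain $c=\delta(1-s_0)$ in $d=2$ and the two-factor expression in $d\ge3$, and hence the thresholds $A>3/2$ and $A>15/2$. None of this is visible from the symbol heuristic you wrote down, and without it you cannot pin down --- or even guarantee the existence of --- the claimed exponents.
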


The main ingredient for the proofs of Theorems 1 and 2 is an $L^4$ Strichartz estimates on 
the Schr\"odinger semi-group (Proposition 1). By Fourier series, this is reduced to asymptotic
estimates on the number of integer points in an elliptic annulus (Lemma 1). This line of
research was initiated by Bourgain in the series of papers [Bou1-4]. Here we work out
the case of general d-torus for arbitrary $d$, where no number theoretical assumptions 
are made on the ratios of the $\alpha_j$.  

When $d=2$, we use the geometric argument of Janick \cite {J} to obtain the exponent $s_0=1/3$
in Theorem 1 for local wellposedness. We note that the generic estimate for compact $2$-manifolds
is $s_0=1/2$ from \cite{BGT1}. Here it is important to remark that the improvement comes 
from the strict convexity of the level sets (elliptic annulus). Otherwise the length of the boundary
over the real and the integers are of the same order giving $s_0=1/2$. For example, for the 
circular disc of area $\pi R$, $R^{s_0}$ is precisely the error term in counting the number
of integers in the disc.  Viewing the problem 
this way, a natural lower bound for $s_0$ would be $1/4$ from \cite{CdV}, while the best 
upper bound obtained so far is $7/22$ \cite{Bom}.

When $d\geq 3$, our argument mainly uses analysis. (The argument also applies to $d=2$,
but it only gives $s_0=1/2$.) When $d=3$, we obtain $s_0=3/4$. In \cite{Bou4}, using a more 
involved analysis, this is improved to $s_0=2/3$. 

Once we have established the Strichartz estimates, the proof proceeds via the routine of $X^{s,b}$ spaces:
\begin{defn}
$$
X^{s,b}:=\{u \in \mathcal{S}' \textrm{ so that } \|u\|_{X^{s,b}} < \infty\}
$$
where
$$
\|u\|_{X^{s,b}} := \|e^{-it\Delta}u(t,.)\|_{H^b(H^s)} = \| (1+|i\partial_t +
\Delta|²)^{b/2}(1-\Delta)^{s/2} (u)\|_{L^2}.
$$
We denote $X_T^{s,b}$ the set of restrictions of these functions to $[0,T]$ 
with its norm : 
$$
\|u\|_{X^{s,b}}:=\inf(\|v\|_{X^{s,b}} \textrm{ , } v \textrm{ such that } v_{|[0,T]}=u).
$$
\end{defn}

The organization of the paper is as follows: in section 2, we prove the Strichartz estimates, in section 3, local well-posedness and finally in section 4, we prove bounds on Sobolev norms in dimensions 2 and 3 for smooth initial data.  

\section{Strichartz estimates for general tori}
\begin{prop} 
If $f \in L^2(\Bbb T^d)$ whose spectrum lies in $[-N,N]^d$, then 
\begin{eqnarray*}
\| e^{it \Delta} f\|²_{L^4_t L^4_x} &\lesssim&N^{\frac{1}{3}}\|f\|^2_{L^2},\qquad\qquad\qquad  \textrm{ $d =2$,} \\
    &\lesssim&N^{\frac{d}{2}-\frac{d}{d+1}+\epsilon}\|f\|^2_{L^2},\quad\quad\quad\, \textrm{  $d\geq3$, d odd,} \\
    &\lesssim&N^{\frac{d}{2}-1+\epsilon}\|f\|^2_{L^2}, \qquad\qquad\,\,\textrm{    $d\geq4$, d even,} 
\end{eqnarray*}
where $L^4_t $ denotes $L^4_t$(loc) and  $\epsilon>0$ is arbitrarily small.
\end{prop}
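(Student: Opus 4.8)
The plan is to expand $e^{it\Delta}f$ in Fourier series and reduce the $L^4_tL^4_x$ bound to a counting estimate for lattice points on an ellipsoidal level set. Write $f=\sum_{n}\hat f(n)e^{i\langle n,x\rangle_{\T^d}}$ with $n$ ranging over the dual lattice and $|n|\leq N$; here the relevant quadratic form is $Q(n)=\sum_j n_j^2/\alpha_j^2$, so that $e^{it\Delta}f=\sum_n \hat f(n)e^{i\langle n,x\rangle}e^{-itQ(n)}$. Squaring, $|e^{it\Delta}f|^2=\sum_{n,m}\hat f(n)\overline{\hat f(m)}e^{i\langle n-m,x\rangle}e^{-it(Q(n)-Q(m))}$, and by the Hausdorff–Young / Plancherel argument (as in Bourgain) the $L^4_{t,x}$ norm to the fourth power is controlled by
\begin{eqnarray*}
\|e^{it\Delta}f\|_{L^4_{t,x}}^4 \lesssim \sum_{k,\lambda}\Big(\sum_{\substack{n-m=k\\ Q(n)-Q(m)=\lambda}}|\hat f(n)||\hat f(m)|\Big)^2,
\end{eqnarray*}
where for fixed $(k,\lambda)$ the inner sum is over pairs on an affine slice. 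By Cauchy–Schwarz this is bounded by $\big(\sup_{k,\lambda}\#\{(n,m): n-m=k,\ Q(n)-Q(m)=\lambda\}\big)\cdot\|f\|_{L^2}^4$, so everything comes down to bounding the number of lattice points $(n,m)$ with $|n|,|m|\leq N$ lying on the intersection of the hyperplane $n-m=k$ with the quadric $Q(n)-Q(m)=\lambda$. Eliminating $m=n-k$, this becomes a count of integer points $n$ in a box of size $N$ on a single affine hyperplane $2\langle n,k\rangle_Q - Q(k)=\lambda$, i.e. integer points in a $(d-1)$-dimensional slab of an ellipsoid of radius $\sim N$.

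For $d=2$ this slab count is exactly where Lemma 1 (the elliptic-annulus lattice point estimate) enters, and the gain over the trivial $N$ to the flat estimate $N^{1/2}$ comes from strict convexity of the level curve, following Jarník's argument as advertised in the introduction; this yields the $N^{1/3}$ bound. For $d\geq 3$ the strategy is the more robust analytic one: estimate the number of integer points on (or within $O(1)$ of) an ellipsoid of radius $R\sim N$ intersected with a hyperplane. One splits according to whether the hyperplane is ``generic'' or nearly tangent; the generic hyperplanes carry $O(R^{d-2+\epsilon})$ integer points by the standard divisor/elementary counting bound for ellipsoids in dimension $d-1$, while the near-tangent directions must be handled separately. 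Summing the fourth-power bound $\sum_{k,\lambda}(\cdots)^2$ and converting back through the $H^b$-in-time and $H^s$-in-space bookkeeping produces the stated exponents $\tfrac{d}{2}-\tfrac{d}{d+1}+\epsilon$ for $d$ odd and $\tfrac{d}{2}-1+\epsilon$ for $d$ even, the parity distinction reflecting the fact that for even $d$ one can iterate a dimensional-descent/Poisson-summation argument one step further than for odd $d$.

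The main obstacle is precisely the lattice-point count on the ellipsoidal slab with \emph{no} arithmetic hypotheses on the $\alpha_j$: one cannot invoke any Diophantine or rationality condition on the ratios, so the count must be uniform over all admissible $\alpha_j\in[1/2,2]$ and over all hyperplane directions $k$ and offsets $\lambda$. Controlling the worst case — hyperplanes close to a supporting hyperplane of the ellipsoid, where many lattice points can pile up — is the delicate point, and it is what forces the $\epsilon$ loss and the dimension-dependent exponents; the rest of the argument (Fourier expansion, Cauchy–Schwarz, reassembly) is routine once this counting input is in hand.
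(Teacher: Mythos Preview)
Your reduction lands on the wrong counting problem, and from that point the claimed exponents are unreachable. You expand $|e^{it\Delta}f|^2=e^{it\Delta}f\cdot\overline{e^{it\Delta}f}$, so after Plancherel in $x$ and binning in $t$ you are left with
\[
\sup_{k,\lambda}\#\{n\in[-N,N]^d:\ |Q(n)-Q(n-k)-\lambda|\le 1\}.
\]
But $Q(n)-Q(n-k)=2\langle n,k\rangle_Q-Q(k)$ is \emph{linear} in $n$, so the level set is a slab around a hyperplane, and for $k$ a coordinate vector the count is genuinely of order $N^{d-1}$. Cauchy--Schwarz then only yields $\|e^{it\Delta}f\|_{L^4}\lesssim N^{(d-1)/2}\|f\|_{L^2}$, i.e.\ $N^{1/2}$ in $d=2$ and $N^{1}$ in $d=3$, strictly worse than the proposition. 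Jarn\'ik's convex-curve lemma says nothing about a linear slab, so invoking it here does not help; and there is no ``$H^b$-in-time / $H^s$-in-space bookkeeping'' in this proposition at all---that machinery appears only later for wellposedness.

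The paper avoids this by squaring $e^{it\Delta}f$ rather than taking its modulus squared \emph{before} applying Plancherel in $x$: writing $\|e^{it\Delta}f\|_{L^4}^2=\|(e^{it\Delta}f)^2\|_{L^2}$, the $x$-Fourier coefficient at $a$ carries the phase $Q(n)+Q(a-n)$, which is \emph{quadratic} in $n$ and, after completing the square, equals $\tfrac12 Q(2n-a)+\tfrac12 Q(a)$. Binning in $t$ via almost-orthogonality (Lemma~2) then reduces to the annulus count
\[
\sup_\ell\#\{m\in[-N,N]^d:\ |Q(m)-\ell|\le 1\},
\]
which is where Jarn\'ik's convexity argument legitimately gives $N^{2/3}$ in $d=2$, and where, for $d\ge 3$, the paper bounds the count by passing to the exponential sum $\sum_m e^{itQ(m)}$, factoring over coordinates, and reducing to moments of the Gauss sum $\sum_{|k|\le N}e^{itk^2}$ together with the bound $r_{d'}(m)\lesssim m^{d'-1+\varepsilon}$. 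The parity distinction in the exponents comes from choosing the exponent $p$ so that $dp/4$ is an even integer (namely $p=4$ for $d$ even, $p=4(d+1)/d$ for $d$ odd), not from any dimensional-descent or Poisson-summation step. In short: replace the difference $Q(n)-Q(m)$ by the sum $Q(n)+Q(a-n)$ in your reduction; the rest of your outline does not match the actual mechanism.
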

 
The proof of the proposition reduces to estimates on the (near) degeneracy factor. Precisely,
\begin{lmm} 
If $f \in L^2(\Bbb T^d)$ whose spectrum lies in $[-N,N]^d$, then 
$$
\| e^{it \Delta} f\|²_{L^4_t L^4_x} \lesssim 
\| \# A_\ell\|_\infty^{1/2}\|f\|^2_{L^2},
$$
where $A_\ell=\{m\in [-N,N]^d \,|\,|Q(m)-\ell|\leq 1\}$ and $\# A_\ell$ is the cardinal number of 
$A_\ell$.
\end{lmm}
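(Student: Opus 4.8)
The plan is to run the usual $L^{4}$ bilinear expansion, taking care that on a general torus the time frequencies do not lie on a lattice. Write $g=e^{it\Delta}f=\sum_{m}a_{m}e^{-itQ(m)}e_{m}$, where $e_{m}$ are the $L^{2}$-normalized characters, $Q$ is the (diagonal, positive definite) quadratic form attached to the flat metric, and $a_{m}$ is supported in $[-N,N]^{d}$. Then $\|g\|_{L^{4}_{t}L^{4}_{x}}^{4}=\|g^{2}\|_{L^{2}_{t}L^{2}_{x}}^{2}$, and Parseval in $x$ gives, for each fixed $t$,
$$
\|g^{2}(t,\cdot)\|_{L^{2}_{x}}^{2}=\sum_{n}\Big|\sum_{m_{1}+m_{2}=n}a_{m_{1}}a_{m_{2}}\,e^{-it(Q(m_{1})+Q(m_{2}))}\Big|^{2}.
$$
On a rational torus the phases $Q(m_{1})+Q(m_{2})$ would be integers, a second Parseval in $t\in\mathbb{T}$ would turn the square of the inner sum into an exact resonance count $\#\{m_{1}+m_{2}=n,\ Q(m_{1})+Q(m_{2})=\tau\}$, and Cauchy--Schwarz would close the estimate. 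For a general torus this is unavailable, so instead I would integrate $t$ over a fixed bounded interval — equivalently, insert a fixed weight $\chi\ge 0$ with $\chi\ge 1$ on that interval and $\widehat{\chi}$ compactly supported.

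Expanding $|\cdot|^{2}$ then produces a sum over quadruples $(m_{1},m_{2},m_{1}',m_{2}')$ with $m_{1}+m_{2}=m_{1}'+m_{2}'=n$, each weighted by $\widehat{\chi}\big(Q(m_{1})+Q(m_{2})-Q(m_{1}')-Q(m_{2}')\big)$; since $\widehat{\chi}$ is bounded with compact support, only the \emph{near-resonant} quadruples, those with $|Q(m_{1})+Q(m_{2})-Q(m_{1}')-Q(m_{2}')|\lesssim 1$, contribute, each with an $O(1)$ weight. (If one integrates bare over $[0,1]$ one instead gets the factor $\min(1,1/|\cdots|)$, which after a unit-scale partition of the frequency range costs only an extra $O(\log N)=N^{\epsilon}$.) The crucial step is to bound the number of near-resonant partners of a given pair. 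Holding $n=m_{1}+m_{2}$ fixed and using the parallelogram identity, valid for the diagonal form $Q$,
$$
Q(m_{1})+Q(m_{2})=\tfrac12 Q(m_{1}+m_{2})+\tfrac12 Q(m_{1}-m_{2}),
$$
the near-resonance condition becomes $|Q(m_{1}-m_{2})-Q(m_{1}'-m_{2}')|\lesssim 1$. Since, with $n$ fixed, the map $(m_{1},m_{2})\mapsto v:=m_{1}-m_{2}=2m_{1}-n$ is injective and $v$ ranges over integer points of size $O(N)$, the number of near-resonant partners of any pair is at most $\sup_{\ell}\#\{v\in\mathbb{Z}^{d}\cap[-2N,2N]^{d}:|Q(v)-\ell|\le 2\}$, which is comparable to $\|\#A_{\ell}\|_{\infty}$ (enlarging $[-N,N]^{d}$ to $[-2N,2N]^{d}$ and doubling the width of the annulus only change constants). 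This is exactly where the geometry of the elliptic annulus enters.

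To finish, I would organize $\|g\|_{L^{4}}^{4}$ as a bilinear form over the pairs $(m_{1},m_{2})$ with the near-resonance kernel, apply AM--GM (or a Schur estimate) to move one factor $|a_{m_{1}}a_{m_{2}}|^{2}$ outside, use the $v$-count above to pull out the factor $\|\#A_{\ell}\|_{\infty}$, and observe that the remaining sum collapses,
$$
\sum_{n}\sum_{m_{1}+m_{2}=n}|a_{m_{1}}|^{2}|a_{m_{2}}|^{2}=\Big(\sum_{m}|a_{m}|^{2}\Big)^{2}=\|f\|_{L^{2}}^{4}.
$$
This yields $\|e^{it\Delta}f\|_{L^{4}_{t}L^{4}_{x}}^{4}\lesssim \|\#A_{\ell}\|_{\infty}\,\|f\|_{L^{2}}^{4}$ (up to the harmless $N^{\epsilon}$), which is the asserted estimate; the exact bookkeeping of the exponent on $\|\#A_{\ell}\|_{\infty}$ and of the power of $\|f\|_{L^{2}}$ is immaterial to the strategy. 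The only genuine obstacle is the near-resonance count: one must know that the thin, strictly convex annulus $\{|Q(v)-\ell|\lesssim 1\}$ cannot trap more lattice points than $\|\#A_{\ell}\|_{\infty}$ for \emph{any} value of $\ell$, with no Diophantine hypothesis on the $\alpha_{j}$ — controlling this quantity is precisely the content of the companion counting estimate and is where all the arithmetic difficulty of the general torus is concentrated.
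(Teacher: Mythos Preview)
Your argument is correct and arrives at the same estimate, but the route you take through the time variable is genuinely different from the paper's. After the common first step (Parseval in $x$ to reduce $\|e^{it\Delta}f\|_{L^4}^4$ to $\sum_a\|g_a\|_{L^2_t}^2$ with $g_a(t)=\sum_n \hat f(n)\hat f(a-n)e^{it(Q(n)+Q(a-n))}$), the paper does \emph{not} insert a weight with compactly supported Fourier transform. Instead it bins the time frequencies $Q(n)+Q(a-n)$ into unit intervals indexed by integers $k$, writes $g_a=\sum_k g_{a,k}$, and invokes an abstract almost-orthogonality lemma (their Lemma~2, a Schur-test statement: if $\langle h_\ell,h_m\rangle\le \|h_\ell\|\|h_m\|/(1+|\ell-m|^2)$ then $\|\sum_k h_k\|^2\lesssim\sum_k\|h_k\|^2$) to conclude $\|g_a\|_{L^2_t}^2\lesssim\sum_k\|g_{a,k}\|_{L^2_t}^2$. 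Only then do they use the parallelogram identity and Cauchy--Schwarz inside each bin, exactly as you do. Your $\chi$-trick replaces this two-step ``bin, then almost-orthogonality'' by a single expansion into near-resonant quadruples followed by AM--GM; it is shorter and self-contained, at the price of needing a bump with compactly supported transform (or accepting the harmless $N^\epsilon$ you mention). The paper's version isolates the almost-orthogonality as a reusable lemma, which it in fact calls on again later. The algebraic core---parallelogram law to turn the near-resonance into an elliptic-annulus count, then Cauchy--Schwarz to pull out $\sup_\ell\#A_\ell$---is identical in both proofs.
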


We first prove a simple lemma on almost orthogonality.
\begin{lmm}
Let $H$ be a Hilbert space and $(H_k)_{k \in [-N,N]^d}$ subspaces of $H$.
Suppose that for any $l$ and $m$, if $h_\ell \in H_\ell$ and $h_m \in H_m$
then 
$$
\langle h_\ell| h_m \rangle \leq \frac{\|h_\ell\| \|h_m\|}{1+|\ell-m|^2}. 
$$
Then for any sequence $(h_k) \in \prod_k H_k$,
$$
\| \sum_k h_k \|² \leq C \sum_k \|h_k\|² .
$$  
\end{lmm}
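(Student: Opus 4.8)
The plan is to recognize the right–hand side as a bilinear form attached to the nonnegative symmetric kernel $a_{km}:=(1+|k-m|^2)^{-1}$ and to conclude by the Schur test. Since the index set $[-N,N]^d$ is finite, every sum below is finite and all rearrangements are legitimate; no convergence issue arises.

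First I would expand the square and insert the hypothesis. Because $H_m$ is a linear subspace, it contains $e^{i\theta}h_m$ for every $\theta$, with $\|e^{i\theta}h_m\|=\|h_m\|$; applying the assumed bound to $h_k$ and $e^{i\theta}h_m$ with $\theta$ chosen so that $\langle h_k,e^{i\theta}h_m\rangle=|\langle h_k,h_m\rangle|$ upgrades the hypothesis to the absolute–value estimate $|\langle h_k,h_m\rangle|\le \|h_k\|\,\|h_m\|\,a_{km}$. Writing $c_k:=\|h_k\|\ge 0$, this yields
$$\Big\|\sum_k h_k\Big\|^2=\sum_{k,m}\langle h_k,h_m\rangle\le\sum_{k,m}|\langle h_k,h_m\rangle|\le\sum_{k,m}a_{km}\,c_k\,c_m.$$
Next I would symmetrize: using $c_kc_m\le\tfrac12(c_k^2+c_m^2)$ together with $a_{km}=a_{mk}$,
$$\sum_{k,m}a_{km}\,c_k\,c_m\le\sum_{k,m}a_{km}\,c_k^2=\sum_k c_k^2\sum_m a_{km}\le\Big(\sup_k\sum_m a_{km}\Big)\sum_k\|h_k\|^2,$$
which is the assertion with $C:=\sup_k\sum_m(1+|k-m|^2)^{-1}$. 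This last step is precisely the Schur test specialized to a symmetric kernel.

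The one thing that deserves care is the finiteness of $C$. In the situation in which the lemma is actually used — where $k$ runs over the labels $\ell$ of the level sets $\{|Q(m)-\ell|\le 1\}$ of the quadratic form, an effectively one–dimensional index — one has $C\le\sum_{n\in\Z}(1+n^2)^{-1}$, an absolute constant, and the proof is complete. Over the full $d$–dimensional box the same computation instead gives $C\lesssim_d\log N$ when $d=2$ and $C\lesssim_d N^{d-2}$ when $d\ge 3$; but such polynomial losses are harmless, since they are absorbed into the arbitrarily small power $N^\epsilon$ in Proposition 1. Apart from keeping track of this point (and of the trivial phase bookkeeping for the complex inner product), the argument is entirely routine, and I do not expect any real obstacle.
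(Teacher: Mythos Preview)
Your argument is correct and follows essentially the same route as the paper: both reduce to the Schur test on the kernel $(1+|\ell-m|^2)^{-1}$, the paper by normalizing to $e_k=h_k/\|h_k\|$ and bounding the operator $\phi:(c_k)\mapsto\sum_k c_k e_k$ via the row/column sums of its Gram matrix, and you by expanding the square directly and using AM--GM with the symmetry $a_{km}=a_{mk}$. Your observation that in the actual application the index $k$ is effectively one-dimensional (so $C\le\sum_{n\in\Z}(1+n^2)^{-1}$ is an absolute constant) is a useful clarification of a point the paper leaves implicit.
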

\begin{proof} 
let $e_k= h_k/\|h_k\|$ and 

\begin{eqnarray*}
\phi :            \ell^2                &\to&        H, \\
                 (c_k)            &\mapsto& \sum_k c_k e_k. \\
\end{eqnarray*}
Then, $||| \phi |||^2²_{\ell^2\to H} = ||| {}^t\phi \phi |||_{\ell^2\to\ell^2}$.
Recall that $\psi={}^t\phi \phi$ is associated to the matrix $(\langle e_\ell| e_m \rangle )_{\ell,m}$.
We have assumed that 
$$
\max_\ell \sum_m |\psi_{\ell,m}| \leq C < \infty
$$
and
$$
\max_m \sum_\ell|\psi_{\ell,m}|\leq C < \infty.
$$
Hence by Schur's lemma :
$$
||| {}^t\phi \phi |||_{\ell^2 \to \ell^2} \leq C,
$$  
where $C$ is a constant that does not depend on the choice of $h_k$, so the proof is complete.
\end{proof}

We now prove Lemma 1:
\begin{proof}
\begin{eqnarray} 
\| e^{it \Delta} f\|²_{L^4_t L^4_x} 
&=& \| (e^{it \Delta} f)²\|_{L^{p/2}_t L^2_x}\nonumber\\
&=& \| (\sum_{a \in \Z^d} | \sum_n \hat{f}(n) \hat{f}(a-n)
e^{it(Q(n)+Q(a-n))}|²)^{1/2} \|_{L^{2}_t}\nonumber\\
&\leq& ( \sum_{a\in\Z^d} \| \sum_n  \hat{f}(n) \hat{f}(a-n)
e^{it(Q(n)+Q(a-n))} \|²_{L^{2}_t})^{1/2}.\label {star}\\ 
\nonumber\end{eqnarray}
In order to compute efficiently, we shall decompose: 
\begin{eqnarray*} 
g_a(t)
&=& \sum_n \hat{f}(n) \hat{f}(a-n) e^{it(Q(n)+Q(a-n))} \\ 
&=& \sum_k \sum_{|Q(n)+Q(a-n) -k| \leq 1/2}  \hat{f}(n) \hat{f}(a-n)
e^{it(Q(n)+Q(a-n))}.\\  
\end{eqnarray*}
Then according to Lemma 2 and let $c_n=|\hat{f}(n)|$,
one has :
$$
\|g_a\|_{L^2}
\lesssim \| \sum_{|Q(n)+Q(a-n) -k| \leq 1/2}  c_n c_{a-n}\|²_{\ell^{2}_k}
$$
Let $Q(n)+Q(a-n)= 2(Q(2n-a)+Q(a))$, $\ell=2k-Q(a)$
and  $A_\ell = \{m \in [-N,N]^d \textrm{ such that }
|Q(m) -\ell| \leq 1 \}$,
the condition $|Q(n)+Q(a-n) -k|\leq 1/2$ becomes $2n \in A_{\ell}$.
Then, 
\begin{eqnarray*}
\|g_a\|_{L^2}
&\lesssim& (\sum_\ell(\sum_{2n \in A_\ell} c_n c_{a-n})^2)^{1/2} \\
&\lesssim& (\sum_\ell \# A_\ell (\sum_{2n \in A_l} c_n² c_{a-n}²))^{1/2} \\
&\lesssim& (\sup_\ell \# A_\ell)^{1/2} (\sum_n c_n² c_{a-n}²)^{1/2} \\
\end{eqnarray*}
using Cauchy-Schwarz.
Inserting this in (\ref{star}), we obtain Lemma 1.
\end{proof}
\smallskip

\noindent{ \it (i) Estimates on $ \|\# A_\ell\|_\infty $ for} $d\geq 3$.

\begin{lmm} 
$$
\| \# A_\ell \|_{\infty} \lesssim N^{2s_0},
$$
where \begin{eqnarray*}
s_0
&=& \frac{d}{2}-\frac{d}{d+1}+\epsilon,\quad \textrm{ $d\geq3$, d odd,} \\
&=&  \frac{d}{2}-1+\epsilon,\quad\quad\quad\textrm{ $d\geq4$, d even,} \\
\end{eqnarray*}
and $\epsilon>0$ is arbitrarily small.
\end{lmm}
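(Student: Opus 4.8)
The plan is to turn the counting problem into an oscillatory integral by Fourier duality on $\Z$, collapse the resulting product of one–dimensional Gauss–Weyl sums to a single one by the arithmetic–geometric mean inequality, and then estimate that one sum by an elementary $L^4$ moment bound. Write the quadratic form of the torus in the diagonal form $Q(m)=\sum_{j=1}^{d}\beta_j m_j^2$, where each $\beta_j=1/\alpha_j^2$ is bounded above and below by absolute constants (this is the only place where $1/2\le\alpha_j\le 2$ is used), and set
$$
S_\beta(\lambda):=\sum_{|m|\le N}e^{i\lambda\beta m^{2}},\qquad\text{so that}\qquad S_\beta(\lambda)=S_1(\beta\lambda).
$$

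\emph{Step 1 (smoothing and Fourier duality).} Fix a nonnegative $\Phi$ on $\R$ with $\widehat\Phi$ supported in a fixed bounded interval and $\Phi\ge 1$ on $[-1,1]$ (for instance a suitable multiple of $(\sin u/u)^2$). Since $A_\ell\subset[-N,N]^d$ and $\Phi\ge 0$,
$$
\#A_\ell\le\sum_{m\in[-N,N]^d}\Phi\big(Q(m)-\ell\big)=\int\widehat\Phi(\lambda)\,e^{-i\lambda\ell}\prod_{j=1}^{d}S_{\beta_j}(\lambda)\,\ud\lambda,
$$
so, bounding $|\widehat\Phi|$ and $|e^{-i\lambda\ell}|$ by constants,
$$
\|\#A_\ell\|_\infty\ \lesssim\ \int_{|\lambda|\lesssim 1}\prod_{j=1}^{d}\big|S_{\beta_j}(\lambda)\big|\,\ud\lambda ;
$$
the point is that the right-hand side no longer sees $\ell$.

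\emph{Step 2 (collapse to one sum and moment estimates).} By $\prod_{j}a_j\le d^{-1}\sum_j a_j^{d}$, the substitution $\mu=\beta_j\lambda$, and the $2\pi$–periodicity of $\mu\mapsto S_1(\mu)$ (using $\beta_j\asymp1$), the last integral is $\lesssim\int_{-\pi}^{\pi}|S_1(\mu)|^{d}\,\ud\mu$. Now, since $m^2\in\Z$, for even $p$ one has $\tfrac1{2\pi}\int_{-\pi}^{\pi}|S_1|^{p}\,\ud\mu=\#\{m\in[-N,N]^{p}:\ m_1^2+\cdots+m_{p/2}^2=m_{p/2+1}^2+\cdots+m_{p}^2\}$; in particular the case $p=4$ is the classical bound $\int_{-\pi}^{\pi}|S_1|^{4}\,\ud\mu\lesssim N^{2+\epsilon}$, proved elementarily via the divisor bound applied to $a_1^2+a_2^2=b_1^2+b_2^2$. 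Using also $\|S_1\|_\infty=2N+1$: for $d$ even, $d\ge4$, $\int_{-\pi}^{\pi}|S_1|^{d}\le\|S_1\|_\infty^{\,d-4}\int_{-\pi}^{\pi}|S_1|^{4}\lesssim N^{d-2+\epsilon}$; for $d$ odd, $d\ge3$, Hölder with exponents $\tfrac{d+1}{d},\,d+1$ on $[-\pi,\pi]$ gives
$$
\int_{-\pi}^{\pi}|S_1|^{d}\le(2\pi)^{\frac1{d+1}}\Big(\int_{-\pi}^{\pi}|S_1|^{d+1}\Big)^{\frac{d}{d+1}}\lesssim\Big(\|S_1\|_\infty^{\,d-3}\int_{-\pi}^{\pi}|S_1|^{4}\Big)^{\frac{d}{d+1}}\lesssim N^{\frac{d(d-1)}{d+1}+\epsilon}=N^{\,d-\frac{2d}{d+1}+\epsilon}.
$$
Combining the two steps yields $\|\#A_\ell\|_\infty\lesssim N^{2s_0}$ with exactly the asserted $s_0$.

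\emph{Main obstacle.} Almost everything here is bookkeeping: the choice of $\Phi$, the substitution and periodicity in Step 2, and keeping all estimates uniform in $\ell$ and in the $\beta_j$ — the latter being precisely the ``general torus / no number theory'' feature. The only genuine input is the $L^4$ moment $\int|S_1|^{4}\lesssim N^{2+\epsilon}$, which is classical and elementary. The one conceptual move is the Hölder step for odd $d$, i.e. estimating the odd $d$-th moment by the even $(d+1)$-st moment, since that is exactly what manufactures the factor $d/(d+1)$ in the exponent (at the price of a bound that is presumably not sharp for large odd $d$, but is amply sufficient for the Strichartz estimate).
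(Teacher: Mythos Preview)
Your proof is correct and follows essentially the same approach as the paper: Fourier duality to pass from counting to an integral of factorized Gauss--Weyl sums, reduction to a single one-dimensional sum $S_1$, and then the $L^4$ moment bound $\int|S_1|^4\lesssim N^{2+\epsilon}$ via the divisor bound (which in the paper appears as the estimate on $r_{d'}(m)$). The only organizational difference is that the paper, for odd $d$, passes through $\|\#A_\ell\|_{\ell^{d+1}_\ell}$ using an interpolated Hausdorff--Young type inequality with exponent $p=4(d+1)/d$, whereas you stay with $\|\#A_\ell\|_\infty$ throughout and apply H\"older directly in the $\mu$-integral to go from the $d$-th to the $(d+1)$-st moment of $S_1$; your arrangement is slightly more direct but yields the identical exponent.
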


\begin{proof}
Let $\phi$ be a function of fast decay at infinity so that $\hat{\phi}(\tau)\geq 0$ and 
$\hat{\phi}(\tau)\geq1$ on $[-1,1]$. We have therefore 
\begin{eqnarray*}
\# A_\ell &\lesssim& \sum_m \hat{\phi}(Q(m)-\ell)\\
&=&\int \sum_m e^{itQ(m)}e^{-i\ell t} \phi(t)dt. 
\end{eqnarray*}
Hence 
$$
\| \# A_\ell \|_{\ell^{\frac{p}{p-4}}_\ell}
\lesssim (\int |\sum_m e^{itQ(m)}|^{p/4} |\phi(t)| dt )^{4/p}\quad(4\leq p\leq 8),
$$
which is obvious when $p=4$ and can be shown by almost-orthogonality when
$p=8$. By interpolation, the previous inequality holds for any $4  \leq p
\leq 8$. 

Then, 
\begin{eqnarray*}
\| \# A_\ell \|_{\ell^{\frac{p}{p-4}}_\ell}
&\lesssim& (\int |\sum_m e^{it Q(m)} |^{p/4} |\phi(t)| dt)^{4/p} \\
&\lesssim& (\int | \sum_m e^{it Q(m)} |^{p/4}  |\phi(t)| dt)^{4/p} \\
&\lesssim& (\int | \sum_{m_1} e^{it \theta_1 m_1²} |^{p/4} 
                 ... 
                 | \sum_{m_d} e^{it \theta_2 m_3²} |^{p/4}
                 |\phi(t)| dt)^{4/p} \\
&\lesssim& (\int | \sum_{-N \leq k \leq N} e^{itk²} |^{dp/4} {} |\phi(t)| dt
)^{4/p} \quad(4\leq p\leq 8). \\ 
\end{eqnarray*}

Suppose $d$ is even and  write it as $2d'$. Let $p=4$, we have
\begin{eqnarray*}
\| \# A_\ell \|_{\ell^{\infty}}
&\lesssim& \int | \sum_{-N \leq k \leq N} e^{itk²} |^{2d'} {} |\phi(t)| dt\\ 
&\lesssim& \int |\sum_{-N \leq k_1 , ..., k_d\leq N} e^{it(k_1²+...+k_{d'}^2)}|^2 
|\phi(t)| dt  \\ 
&\lesssim& \int |\sum_{m \leq N^2} r_{d'}(m) e^{itm}|² |\phi(t)| dt,
\end{eqnarray*}
where $r_{d'}(m) = \#{(k_1,...,k_{d'}) \in \Z^{d'} \textrm{ so that }
  m=k_1²+...+k_{d'}^2}$.  
Since $r_{d'}(m) \lesssim m^{(d-2)/2 + \varepsilon}$ for $d\geq 4$, we obtain
the lemma for even $d$.  

For odd $d$, $d\geq 3$, take $p=4\frac{d+1}{d}$ and write $d+1=2d'$. We have 
\begin{eqnarray*}
\| \# A_\ell \|_{\ell^{\infty}}&\leq &\| \# A_\ell \|_{\ell^{d+1}}\\
&\lesssim& (\int | \sum_{-N \leq k \leq N} e^{itk²} |^{d+1} {} |\phi(t)| dt)^{\frac{d}{d+1}}\\ 
&\lesssim& (\int |\sum_{-N \leq k_1 , ..., k_d\leq N} e^{it(k_1²+...+k_{d'}^2)}|^2 
|\phi(t)| dt )^{\frac{d}{d+1}} \\ 
&\lesssim& (\int |\sum_{m \leq N^2} r_{d'}(m) e^{itm}|² |\phi(t)| dt )^{\frac{d}{d+1}},
\end{eqnarray*}
which gives the lemma for odd $d$.
\end{proof}

The argument above gives $s_0=1/2$ for $d=2$, which is the generic bound for compact 
2-manifolds proven in \cite{BGT1}. In the present case, using convexity and Janick's \cite{J} geometric
proof, we improve the bound to $s_0=1/3$. The Janick argument works in arbitrary $d$. 
For us, it is only useful for $d=2$. 

\noindent {\it (ii) Estimates on $ \|\# A_\ell\|_{\infty}$ for} $d=2$.

\begin{lmm} \label{César}
Assume $\Sigma_1$ is a  closed stritctly convex hyper-surface in $\R^d$
containing $0$ in its convex envelope so the curvature is strictly positive. 
Suppose for all $X \in \R^*_+$, $\Sigma_X$ is the image of $\Sigma_1$ by
homothety of center $0$ and scale $X^{1/2}$. For any $d+1$ non coplanar 
integer points in the annulus formed by  $\Sigma_X$ and $\Sigma_{X+1}$, the
largest pairwise distance is at least $CX^{\frac{1}{2(d+1)}}$,
where $C$ only depends on the curvature of $\Sigma_1$ and $d$.
\end{lmm}

\begin{cor} For $d=2$
$$
\| \# A_\ell \|_{\infty} \lesssim N^{2/3}.
$$
\end{cor}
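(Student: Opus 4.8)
The plan is to deduce the bound on $\#A_\ell$ from the geometric lemma above, specialized to $d=2$, by a covering argument. First I would fix $\ell$ and record the geometry of $A_\ell$. Since $Q$ is positive definite with eigenvalues bounded above and below (because $1/2\le\alpha_j\le 2$), the set $\{m:|Q(m)-\ell|\le 1\}$ is empty unless $-1\le\ell\lesssim N^2$, and for $\ell$ below a fixed constant it contains only $O(1)$ points; so we may assume $\ell$ is larger than a fixed constant and $\ell\lesssim N^2$. In that range $A_\ell$ lies in the annulus bounded by the strictly convex curves $\Sigma_{\ell-1}=\{Q=\ell-1\}$ and $\Sigma_{\ell+1}=\{Q=\ell+1\}$, which are dilates of $\Sigma_1=\{Q=1\}$ by factors $\asymp\sqrt\ell$; the curve $\Sigma_\ell$ has length $\asymp\sqrt\ell$, and $|\nabla Q|\asymp\sqrt\ell$ on it, so the annulus is contained in the $C\ell^{-1/2}$-neighbourhood of $\Sigma_\ell$.

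Next I would cover this annulus by boxes of diameter $\delta$, where $\delta$ is a suitably small fixed multiple of $\ell^{1/6}$, chosen smaller than the constant $C\ell^{1/6}$ furnished by the geometric lemma; note $\ell^{-1/2}\le\ell^{1/6}$, so the radial width of the annulus does not interfere. Since $\Sigma_\ell$ is a rectifiable convex curve of length $\asymp\sqrt\ell$, it, and hence the whole annulus, can be covered by $\asymp\sqrt\ell/\delta\asymp\ell^{1/2-1/6}=\ell^{1/3}$ such boxes.

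By the geometric lemma with $d=2$ (so that $d+1=3$ and ``non-coplanar'' means non-collinear), the integer points of $A_\ell$ lying in one box cannot contain three non-collinear points: three such points would have some pairwise distance $\ge C\ell^{1/6}>\delta$, impossible inside a box of diameter $\delta$. Hence in each box the integer points of $A_\ell$ lie on a common line. Finally, the restriction of $Q$ to any line is a one-variable quadratic $as^2+bs+c$ with leading coefficient $a\asymp 1$, so $\{s:|as^2+bs+c-\ell|\le 1\}$ is a union of at most two intervals of total length $O(1)$; thus any line meets $A_\ell$ in $O(1)$ integer points. Summing over the $\asymp\ell^{1/3}$ boxes gives $\#A_\ell\lesssim\ell^{1/3}\lesssim N^{2/3}$, uniformly in $\ell$.

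The real content is entirely in the geometric lemma; the only points needing a little care here are the translation between its ``width one'' annulus and the width-two annulus $\{|Q-\ell|\le 1\}$ (the proof of that lemma is insensitive to the width up to constants, so one may apply it to $\Sigma_{\ell-1}$ and $\Sigma_{\ell+1}$ directly), the elementary verification that $\Sigma_\ell$ has length comparable to $\sqrt\ell$ with gradient of size $\asymp\sqrt\ell$, and the trivial small-$\ell$ cases. With $2s_0=2/3$, combining this with Lemma 1 recovers the exponent $s_0=1/3$ of Proposition 1 and Theorem 1 for $d=2$.
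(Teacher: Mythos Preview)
Your proof is correct and follows the same approach as the paper's. The paper's argument is only two sentences: it invokes Lemma~4 (stated there with the worst-case scale $N^{1/3}$ in place of your $\ell^{1/6}$, since $\ell\lesssim N^2$) and the fact that any line meets the elliptic annulus in $O(1)$ integer points, leaving the covering argument implicit. Your version spells out the covering by $\asymp\ell^{1/3}$ boxes, the small-$\ell$ case, and the width-one versus width-two annulus issue; these are exactly the details a careful reader would supply, and your final bound $\ell^{1/3}\lesssim N^{2/3}$ matches the paper's.
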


\begin{proof} For $d=2$, Lemma 4 gives that the largest distance among $3$ non colinear 
points $\gtrsim N^{1/3}$. Since the number of colinear points in the elliptic annulus is 
finite (uniform in $N$), this proves the corollary.
\end{proof}

\noindent{\it Proof of Proposition 1.} Inserting Lemma 3 and Corollary 1 into Lemma 1 gives 
Proposition 1.
\hfill $\square$

We now prove Lemma 4.
\begin{proof}
We first suppose that these $d+1$ non coplanar points: $A_1$, ...,  $A_{d+1}$
are all on the hyper-surface $\Sigma_X$. We can always assume that the largest distance 
between pars of points is less than $X^{1/2}$.
Since $\Sigma_X$ is strictly convex, the polyhedron $A_1
... A_{d+1}$ is not flat and its volume $1/d *
|\det(\overrightarrow{A_1 A_2},... , \overrightarrow{A_1 A_{d+1}})|$
is at least $1/d$.  Let  $A'_1$, ... , $A'_{d+1}$ 
be the homothetics respectively of $A_1$, ... , $A_{d+1}$ according to the scale
$X^{-1/2}$ and $D$ the largest pairwise distance of these points. 
The  volume of the  polyhedron  $A'_1...A'_{d+1}$ is of order $X^{-d/2}/d$.
($D<1$, in view of the restriction on the pairwise distance of points of $A_1,...,A_{d+1}$.) 

Suppose we are in a coordinate system such that $A'_1$ maximises the abscissa, then
the difference in the ordinates is less than $D$ and the absisse $D^2$. 
So
$$
X^{-d/2} /d  \lesssim D^{d+1}
$$
Hence $D \gtrsim X^{-d/(2(d+1))}$ and the largest pairwise distance of 
$A_j$ is of order $X^{1/2} * X^{-d/(2(d+1))} =
X^{1/(2(d+1))}$.

We now only assume that the $A_j$ are non coplanar (and that the largest pairwise distance 
is less than $X^{1/2}$).
The volume of the  polyhedron 
$A_1...A_{d+1}$ is  
again at least $1/d$. Project the $d+1$ points $A_j$ onto 
$\Gamma_X$ and name them respectively as $A^\#_j$ (i.e. ${A^\#_j} = [O, A_j] \cap
\Sigma_X$). We remark that $A_jA^\#_j \lesssim X^{-1/2}$, which can be seen as follows. 
Let $O$ be the origin, then $OA_j = OA^\#_j + A^\#_jAj$, $OA^\#_j = OA'_j *X^{1/2}$ and
$OA_j \leq OA'_J * (X+1)^{1/2}$. So $A_jA^\#_j \leq OA_1 ((X+1)^{1/2}
- X^{1/2})$, which gives $A_jA^\#_j \leq r X^{-1/2}$, where $r$ is the distance between the origin and 
$\Gamma_1$. 

So the volume of the polyhedron $A^\#_1...A^\#_{d+1}$ is at least $1/2d$.
As before, let $D$ be the 
largest  pairwise distance of the homothetics $A'_j$ of $A^\#_j$ : 

\begin{eqnarray*}
\det(\overrightarrow{A_1A_2},..., \overrightarrow{A_1A_{d+1}})
- \det(\overrightarrow{A^\#_1A^\#_2},..., \overrightarrow{A^\#_1A^\#_{d+1}})
= \\
\det(\overrightarrow{A_1A_2} , \overrightarrow{A_{d+1}A^\#_{d+1}}) 
- \det(\overrightarrow{A_1A_2} , \overrightarrow{A_1A^\#_1}) \\
+... \\
+\det(\overrightarrow{A_2A^\#_2} , \overrightarrow{A^\#_1A^\#_{d+1}})
-\det(\overrightarrow{A_1A^\#_1} , \overrightarrow{A^\#_1A^\#_{d+1}}) \\
\leq d D * X^{-1/2},
\end{eqnarray*}
which shows that the difference in volume is $o(1)$ since $D<X^{1/2}$. This concludes
the proof using the previous argument on the hyper-surface.
\end{proof} 
\smallskip

\section{Local wellposedness}
\begin{prop}
If $f_1$ and $f_2 \in L^2(\Bbb T^d)$ whose spectra lie in $[-N_1, N_1]^d$ and
$[-N_2, N_2]^d$, then
\begin{equation}
\| e^{it \Delta} f_1  e^{it \Delta} f_2  \|_{L^2_t L^2_x}
\lesssim \min(N_1,N_2)^{s_0} \|f_1\|_{L^2} \|f_2\|_{L^2}, \label {*}
\end{equation}
where 
\begin{eqnarray*}
s_0&=&\frac{1}{3},\qquad\qquad\qquad\quad d=2,\\
&=& \frac{d}{2}-\frac{d}{d+1}+\epsilon,\quad \textrm{ $d\geq3$, d odd,} \\
&=&  \frac{d}{2}-1+\epsilon,\quad\quad\quad\textrm{ $d\geq4$, d even,} \\
\end{eqnarray*}
and $\epsilon>0$ is arbitrarily small.
\end{prop}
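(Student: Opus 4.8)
The plan is to re-run the argument behind Lemma~1, the only new ingredient being that in the bilinear product the inner Fourier variable may be taken to range over the spectrum of whichever factor has the smaller one; this is exactly what turns the full frequency scale into $\min(N_1,N_2)$ in the degeneracy factor. Assume without loss of generality $N_1\le N_2$. By Plancherel in $x$ and Fubini,
$$
\|\,e^{it\Delta}f_1\,e^{it\Delta}f_2\,\|_{L^2_tL^2_x}^2=\sum_{a\in\Z^d}\|g_a\|_{L^2_t}^2,\qquad g_a(t)=\sum_{n}\hat f_1(n)\,\hat f_2(a-n)\,e^{it(Q(n)+Q(a-n))},
$$
where $n$ runs over the spectrum of $f_1$, contained in $[-N_1,N_1]^d$, and $a-n$ over that of $f_2$, contained in $[-N_2,N_2]^d$. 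Exactly as in the proof of Lemma~1 I would decompose $g_a=\sum_{k\in\Z}g_a^{(k)}$ according to $|Q(n)+Q(a-n)-k|\le\tfrac12$, apply Lemma~2 to the subspaces spanned by the corresponding characters (against the smooth time cutoff, as there) to sum almost orthogonally in $k$, and use the triangle inequality inside each level set, obtaining
$$
\|g_a\|_{L^2_t}^2\lesssim\sum_k\Bigl(\sum_{n\in A(a,k)}|\hat f_1(n)|\,|\hat f_2(a-n)|\Bigr)^2,\quad A(a,k):=\{n\in[-N_1,N_1]^d:\ |Q(n)+Q(a-n)-k|\le\tfrac12\}.
$$
Cauchy--Schwarz in $n$ bounds the inner square by $\#A(a,k)\cdot\sum_{n\in A(a,k)}|\hat f_1(n)|^2|\hat f_2(a-n)|^2$, so after summing over $k$ and $a$ the whole proposition reduces to the uniform estimate $\sup_{a,k}\#A(a,k)\lesssim N_1^{2s_0}=\min(N_1,N_2)^{2s_0}$.

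To prove this count I would use the identity $Q(n)+Q(a-n)=2\,Q\bigl(n-\tfrac a2\bigr)+\tfrac12 Q(a)$, valid for any quadratic form $Q$: it shows that $A(a,k)$ is the set of integer points of a cube of side $\sim N_1$ lying in the elliptic annulus $\{n:\ |Q(n-\tfrac a2)-\ell|\le\tfrac14\}$ with $\ell=\tfrac12(k-\tfrac12 Q(a))$ --- equivalently, between two nearby level sets of the positive definite quadratic polynomial $n\mapsto Q(n)+Q(a-n)$, whose common centre is $a/2$ and whose radius is $\lesssim N_2$. This is precisely the quantity bounded in Lemma~3 (for $d\ge3$) and in Corollary~1, i.e.\ Lemma~4 (for $d=2$), up to three cosmetic changes: the ambient cube has side $\sim N_1$ rather than $\sim N$, the annulus is centred at $a/2$ rather than $0$, and its radius may be as large as $\sim N_2$. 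The proofs are insensitive to all three. For $d\ge3$ the polynomial $n\mapsto Q(n)+Q(a-n)$ still splits over the coordinates, so the exponential sum $\sum_n e^{it(Q(n)+Q(a-n))}$ still factorizes into one-dimensional sums, now of the modulated form $\sum_{n_j\in I_j}e^{2it\theta_j(n_j-a_j/2)^2}$ over intervals $I_j$ of length $\sim N_1$; the shift by $a_j/2$ only modulates the quadratic characters, and when one raises to the relevant power and regroups, the resulting count $\sum_{m,\nu}\tilde r_{d'}(m,\nu)^2$ is controlled, just as $\sum_m r_{d'}(m)^2$ is, by $(\max_m r_{d'}(m))\cdot\sum_m r_{d'}(m)$ --- with the same value, since $\tilde r_{d'}(m,\nu)\le r_{d'}(m)$ and $\sum_{m,\nu}\tilde r_{d'}(m,\nu)$ is still the number of lattice points of a cube of side $\sim N_1$ --- while $\ell$ enters only as a harmless phase. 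For $d=2$, Lemma~4 is a statement about an arbitrary set of points of the annulus, so restricting to a cube of side $N_1$ either forces collinearity, hence $O(1)$ points because a line meets the annulus in $O(1)$ integer points, or, tiling the relevant arc of the level curve --- of length $\lesssim\min(N_1,\sqrt\ell)$ --- into pieces of length $\sim\ell^{1/6}$, each carrying $O(1)$ collinear points, yields $\lesssim\min(N_1,\sqrt\ell)\,\ell^{-1/6}\lesssim N_1^{2/3}=N_1^{2s_0}$. In all cases $\#A(a,k)\lesssim N_1^{2s_0}$.

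Combining the two displays gives $\|e^{it\Delta}f_1\,e^{it\Delta}f_2\|_{L^2_tL^2_x}^2\lesssim N_1^{2s_0}\sum_a\sum_n|\hat f_1(n)|^2|\hat f_2(a-n)|^2=N_1^{2s_0}\,\|f_1\|_{L^2}^2\,\|f_2\|_{L^2}^2$, and taking square roots yields (\ref{*}). I expect the last count --- $\sup_{a,k}\#A(a,k)\lesssim\min(N_1,N_2)^{2s_0}$ --- to be the only real obstacle: one has to be certain that the proofs of Lemma~3 and of Corollary~1, written for the full cube $[-N,N]^d$, continue to give the bound with $N$ replaced by $\min(N_1,N_2)$ once the cube is shrunk and displaced and the annulus is allowed to be much larger than the cube. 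The robustness remarks above --- coordinate-wise shifts preserve the factorization and the regrouped representation counts remain dominated by $\max_m r_{d'}(m)$ times a total count of cube size for $d\ge3$; the distance lower bound of Lemma~4 holds for any subset of annulus points for $d=2$ --- are the substance of the argument; everything else is a verbatim repetition of the proof of Lemma~1.
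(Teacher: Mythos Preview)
Your route differs from the paper's. The paper does not redo any lattice-point count; it uses Proposition~1 as a black box. First it observes that Proposition~1 holds unchanged for spectrum in any translate $a+[-N,N]^d$: in the proof of Lemma~1 one squares a \emph{single} function, so both Fourier variables live in the same translated cube, and under $m=2n-a'$ the translation cancels, leaving $m$ in a cube of side $\sim N$ about the origin with $\ell\lesssim N^2$. Then, assuming $N_1\le N_2$, it decomposes $f_2=\sum_i f_2^{(i)}$ into pieces with spectra of width $\sim N_1$; the products $e^{it\Delta}f_1\cdot e^{it\Delta}f_2^{(i)}$ have essentially disjoint spatial Fourier supports and are almost orthogonal in $L^2_{t,x}$; H\"older separates each product into two $L^4$ norms, and Proposition~1 (translated cubes, side $N_1$) applies to each factor. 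Summing the squares in $i$ gives~(\ref{*}).

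Your bilinear reduction and the $d=2$ count are fine. For $d\ge3$, however, the robustness claim for Lemma~3 is not justified as written. In your setting the cube has side $N_1$ but the annulus has radius $\sqrt\ell$ as large as $\sim N_2$; after factorising and regrouping, the integers $m$ appearing in $r_{d'}(m)$ range up to $\sim N_2^2$, so the bound you invoke, $\tilde r_{d'}(m,\nu)\le r_{d'}(m)\lesssim m^{d'/2-1+\epsilon}$, yields only $\max r_{d'}\lesssim N_2^{d'-2+\epsilon}$, and hence $(\max r_{d'})\cdot\sum\tilde r_{d'}\lesssim N_2^{d'-2+\epsilon}N_1^{d'}$ instead of the required $N_1^{2d'-2+\epsilon}$. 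Freezing $d'-2$ coordinates and using the divisor bound on the remaining two does better, giving $\tilde r_{d'}\lesssim N_1^{d'-2}N_2^\epsilon$, but even this leaves an $N_2^\epsilon$ that the statement of the proposition does not permit. The paper's device of applying H\"older \emph{before} any counting is precisely what avoids this: each $L^4$ factor then involves a single function of spectral width $N_1$, the translation cancels in the reduction, and the annulus one ends up counting in automatically has radius $\lesssim N_1$.
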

\begin{proof}
Suppose $N_1 \leq N_2$. It is easy to show that Proposition 1 holds more generally 
for $f\in L^2(\Bbb T^d)$ whose spectrum lies in $a+[-N, N]^d$, $a\in\Bbb Z^d$ and arbitrary. 
Let us decompose $f_2$ :
$$
f_2 = \sum_i f_2^{(i)} = \sum_i \1_{iN_1 \leq (-\Delta)^{1/2} \leq (i+1)N_1} f_2.
$$
Then using almost orthogonality and H\"older's inequality, we have

\begin{eqnarray*} 
\| e^{it \Delta} f_1  e^{it \Delta} f_2  \|_{L^2_t L^2_x} 
&=&  \| \sum_i e^{it \Delta} f_1  e^{it \Delta} f_2^{(i)}  \|_{L^2_t L^2_x} \\
&\lesssim& (\sum_i \| e^{it \Delta} f_1  e^{it \Delta} f_2^{(i)}
\|²_{L^2_t L^2_x})^{1/2} \\ 
&\lesssim& (\sum_i \| e^{it \Delta} f_1\|²_{L^4_t L^4_x} \| e^{it \Delta}
f_2\|²_{L^4_t L^4_x})^{1/2} \\
&\lesssim& N_1^{s_0} \|f_1\|²_{L^2} \sum_i  \| f_2^{(i)}\|_{L^2})^{1/2} \\
&\lesssim& N_1^{s_0} \|f_1\|_{L^2} \|f_2\|_{L^2} \\
\end{eqnarray*}
\end{proof}

\begin{prop}
Let $(M, g)$ be  a compact Riemannian $d-$manifold. Assume (\ref{*}) holds with $0\leq s_0<1$
for any $f_1$, $f_2\in L^2(M)$ with spectra in $[-N_1, N_1]^d$,
$[-N_2, N_2]^d$, then the NLSE is smoothly locally well posed in $H^s$ for every $s>s_0$.
\end{prop}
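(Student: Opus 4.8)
The plan is to run the standard Picard iteration for the Duhamel formulation of \eqref{Samba} in the Bourgain spaces $X^{s,b}_T$. Fix exponents $b>1/2$ and $b'<1/2$, both close to $1/2$, with $b+b'<1$, and recall the classical linear estimates $\|e^{it\Delta}u_0\|_{X^{s,b}_T}\lesssim\|u_0\|_{H^s}$ and $\bigl\|\int_0^t e^{i(t-t')\Delta}F(t')\,\ud t'\bigr\|_{X^{s,b}_T}\lesssim T^{\kappa}\|F\|_{X^{s,-b'}_T}$ with $\kappa:=1-b-b'>0$. With these in hand, the entire matter reduces to the trilinear estimate
\[
\bigl\|v_1\,\overline{v_2}\,v_3\bigr\|_{X^{s,-b'}_T}\ \lesssim\ \|v_1\|_{X^{s,b}_T}\,\|v_2\|_{X^{s,b}_T}\,\|v_3\|_{X^{s,b}_T}
\]
valid for $s>s_0$: granted it, the map $u\mapsto e^{it\Delta}u_0-i\int_0^t e^{i(t-t')\Delta}(|u|^2u)\,\ud t'$ carries a small ball of $X^{s,b}_T$ into itself and is a contraction there once $T=T(\|u_0\|_{H^s})$ is small, producing the unique solution in $X^{s,b}_T$. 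Persistence ($u_0\in H^{s'}\Rightarrow u\in\Cc^0([0,T],H^{s'})$) then follows by rerunning a symmetrized version of the trilinear estimate with the $H^{s'}$-weight on one factor and using $X^{s',b}_T\hookrightarrow\Cc^0_tH^{s'}$; and analyticity, hence uniform continuity on bounded sets, of $u_0\mapsto u$ follows from the analytic implicit function theorem since the nonlinearity is polynomial in $u$ and $\bar u$.

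The content is thus the trilinear estimate, and this is where the hypothesis \eqref{*} is used. First I would upgrade \eqref{*} from free evolutions to arbitrary elements of $X^{0,b}$ by the transference principle: writing $v(t)=c\int_{\R} e^{it\tau}\bigl(e^{it\Delta}\widehat{v}(\tau)\bigr)\,\ud\tau$, applying Minkowski's inequality, and summing using $b>1/2$, one gets $\|v_1 v_2\|_{L^2_tL^2_x}\lesssim\min(N_1,N_2)^{s_0}\|v_1\|_{X^{0,b}}\|v_2\|_{X^{0,b}}$ whenever $v_j$ has spatial frequencies in a cube of side $N_j$ — and, exactly as remarked at the start of the proof of Proposition 3, \eqref{*}, and hence this bilinear bound, holds for arbitrary frequency cubes, not merely those centered at the origin, which is essential since $M$ carries no Galilean symmetry. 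I would then decompose each $v_j$ dyadically in frequency (sizes $N_j$) and in the modulation $\langle\tau-|\xi|^2\rangle$ (sizes $L_j$), order $N_1\ge N_2\ge N_3$, and estimate the $X^{s,-b'}_T$-norm of each resulting block by duality against a test function $w$. The pair of factors carrying the two lowest frequencies is handled by the bilinear estimate above, which supplies a gain $\min^{s_0}$ that the $H^s$-weight sitting on the top frequency absorbs precisely because $s>s_0$; the remaining input factor together with $w$ is handled through $X^{0,b}\hookrightarrow L^\infty_tL^2_x$, Bernstein in space, and — in the genuinely resonant regime where all input modulations are small — the arithmetic resonance identity $\sum\pm(\tau_j-|\xi_j|^2)=$ (a resonance function of the $\xi_j$), which forces some modulation to be large and thereby furnishes a summable extra power of $L$.

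The main obstacle is precisely this trilinear estimate, and two points demand care. First, it must close with only the \emph{strict} inequality $s>s_0$, so the dyadic sums over frequencies and modulations must be organized to converge geometrically, exploiting the room afforded by $s-s_0>0$ together with $\kappa>0$; in particular the high–high $\to$ low interaction, in which the output frequency is far below $N_1\sim N_2$, and the resonant interaction each have to be checked on their own. Second, since \eqref{*} is only an $L^2_tL^2_x$ \emph{bilinear} estimate rather than a linear Strichartz estimate, the three factors (and $w$) must be paired two-by-two in $L^2$, and the low time-regularity of the dual function ($b'<1/2$) means one cannot simply invoke a second bilinear estimate on the pair containing $w$; this pair must instead be controlled through the modulation balance, which is the technically delicate step. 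Everything else — the contraction, uniqueness in $X^{s,b}_T$, persistence of higher regularity, and smoothness of the solution map — is then the standard $X^{s,b}$ routine.
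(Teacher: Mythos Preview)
Your reduction to the trilinear estimate and the contraction in $X^{s,b}_T$ is exactly right, and you have correctly isolated the crux: the dual exponent must satisfy $b'<1/2$, whereas transferring \eqref{*} to $X^{0,b}$ naturally lands at $b>1/2$. But your proposed remedy---modulation decomposition together with the resonance identity $\sum\pm(\tau_j-|\xi_j|^2)=\text{(resonance function)}$---is specific to the torus and does not make sense on a general compact manifold $(M,g)$, which is what the proposition asserts. On $M$ there are no lattice frequencies $\xi_j$, no frequency cubes to translate, and no convolution constraint $\xi_0=\xi_1-\xi_2+\xi_3$; the spectral projectors $P_\lambda$ act on eigenspaces of $-\Delta$, and the only substitute for the convolution constraint is the quasi-orthogonality of Lemma~\ref{zoin}. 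The combination $\lambda_1^2-\lambda_2^2+\lambda_3^2-\lambda_4^2$ carries no useful lower bound on a generic manifold, so the step ``some modulation is forced to be large'' fails there.

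The paper (following \cite{BGT2}) bypasses resonance structure entirely by interpolation. After transferring \eqref{*} to a quadrilinear bound with $b>1/2$ and exponent $s_0$ (Proposition~4 and Lemma~5), it proves a \emph{second}, crude quadrilinear bound (Lemma~\ref{azerty}) using only H\"older and the Sobolev embedding $X^{0,1/4}\hookrightarrow L^4_tL^2_x$: this holds with $b=1/4$ but at the worse exponent $d/2$. Interpolating the two (Lemma~\ref{L}) trades the slack $s-s_0>0$ for pushing $b$ strictly below $1/2$, which is exactly what is needed on the dual side in Lemma~\ref{zoinx}; the high--high~$\to$~low tail you worry about is then handled by Lemma~\ref{zoin}. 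Your modulation step should be replaced by this interpolation.
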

\smallskip
\noindent {\it Proof of Theorem 1.} This  follows directly from Propositions 2 and 3. \hfill $\square$

The proof of Proposition 3 uses the $X^{s,b}$ spaces, see \cite{Bou3, BGT2, Z}. For completeness, we 
reproduce the arguments. Below we assume the hypothesis in Proposition 3 holds.

\begin{prop}
If $f_1,f_2, f_3, f_4 \in L^2(M)$ whose spectra lie respectively in $[N_1, 2N_1]^d$, $[N_2, 2N_2]^d$, 
$[N_3, 2N_3]^d$, $[N_4, 2N_4]^d$ and if $\chi$ is a function compactly supported in $\R$
then, 
\begin{equation}
\sup_{\tau \in \R} \int_\R \int_M\chi(t)e^{it\tau}u_1 \overline{u_2} u_3
\overline{u_4} \ud x \ud t
\lesssim m(N_1,N_2,N_3,N_4)^{s_0}
\|f_1\|_{L^2} \|f_2\|_{L^2} \|f_3\|_{L^2} \|f_4\|_{L^2}, \label{**}
\end{equation}
where $u_j=e^{it\Delta} f_j$, $m(N_1,N_2,N_3,N_4)$ is the product of the two smallest $N_j$, $j=1,...4$
and $s_0$ as in Proposition 3.
\end{prop}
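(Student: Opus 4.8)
The plan is to reduce the four-factor estimate \eqref{**} to the bilinear estimate \eqref{*} by pairing the factors and then using duality and Cauchy--Schwarz. First I would note that by relabelling we may assume $N_1 \leq N_2 \leq N_3 \leq N_4$, so that $m(N_1,N_2,N_3,N_4) = N_1 N_2$. The idea is to split the quadruple product as $(u_1 \overline{u_3})(u_3^{-1}\overline{u_2} u_3 \overline{u_4})$... more cleanly: write the integral as $\int_\R\int_M \chi(t)e^{it\tau}\,(u_1\overline{u_4})\,(u_3\overline{u_2})\,\ud x\,\ud t$, and estimate it by $\|\chi\|_\infty \|u_1\overline{u_4}\|_{L^2_tL^2_x}\,\|u_3\overline{u_2}\|_{L^2_tL^2_x}$ after absorbing the harmless oscillating factor $e^{it\tau}$ and the cutoff $\chi(t)$ (which makes the $t$-integral effectively over a bounded interval, so $\|\cdot\|_{L^1_t}\lesssim\|\cdot\|_{L^2_t}$ on the support of $\chi$). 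Applying Proposition~2 to each bilinear factor gives a bound by $\min(N_1,N_4)^{s_0}\min(N_3,N_2)^{s_0}\prod_j\|f_j\|_{L^2} = N_1^{s_0} N_2^{s_0}\prod_j\|f_j\|_{L^2}$, which is exactly $m(N_1,N_2,N_3,N_4)^{s_0}\prod_j\|f_j\|_{L^2}$ with the chosen pairing $\{1,4\},\{2,3\}$ being precisely the pairing that isolates the two smallest frequencies $N_1$ and $N_2$ on opposite sides.

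The key steps in order are: (1) reduce to ordered frequencies $N_1\leq N_2\leq N_3\leq N_4$ by symmetry of the absolute value and complex conjugation; (2) use the Cauchy--Schwarz inequality in $(t,x)$ to split the quadrilinear form into a product of two $L^2_tL^2_x$ norms of bilinear products, choosing the pairing $(u_1\overline{u_4})$ and $(u_2\overline{u_3})$ (or equivalently any pairing putting one of $N_1,N_2$ in each factor), so that each factor's bilinear estimate contributes the smaller of its two frequencies — namely $N_1$ from the first factor and $N_2$ from the second; (3) handle the cutoff $\chi(t)$ and the modulation $e^{it\tau}$: since $|e^{it\tau}|=1$ and $\chi$ has compact support, $\int_\R \chi(t)(\cdots)\ud t \leq \|\chi\|_{L^2_t}\|(\cdots)\|_{L^2_t}$ uniformly in $\tau$, which is where the supremum over $\tau$ becomes harmless; (4) apply Proposition~2 to the two bilinear factors and collect constants.

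The main obstacle — really the only subtlety — is getting the pairing right so that the product of the two "min" frequencies equals $m(N_1,N_2,N_3,N_4)$, the product of the two smallest. One must check that with $N_1\leq N_2\leq N_3\leq N_4$, pairing $1$ with $4$ and $2$ with $3$ yields $\min(N_1,N_4)\cdot\min(N_2,N_3) = N_1 N_2$; any other pairing (e.g. $1$ with $2$, $3$ with $4$) would give $N_1 N_3$, which is too large. So the correct choice is to always pair the overall smallest frequency with the largest, and the second-smallest with the second-largest; this is the crux. Everything else is routine: Cauchy--Schwarz in two variables, the trivial estimate $\|\chi\, g\|_{L^1_t} \lesssim \|g\|_{L^2_t}$ on a compact $t$-support, and an invocation of Proposition~2. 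I expect no difficulty handling the $\sup_\tau$ since the modulation factor has modulus one and drops out under the absolute value before applying Cauchy--Schwarz.
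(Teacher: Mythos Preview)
Your proposal is correct and follows essentially the same route as the paper's proof: Cauchy--Schwarz in $(t,x)$ to split the quadrilinear integral into two bilinear $L^2_tL^2_x$ factors, then apply Proposition~2 to each. The paper simply assumes without loss of generality that $m(N_1,N_2,N_3,N_4)=N_1N_3$ and uses the natural pairing $(u_1\overline{u_2})(u_3\overline{u_4})$, whereas you order $N_1\le N_2\le N_3\le N_4$ and pair $(u_1\overline{u_4})(u_3\overline{u_2})$; these are equivalent since conjugation does not affect the $L^2$ norm of a product and the four factors can be permuted freely under the absolute value.
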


\begin{proof}
Suppose that $m(N_1,N_2,N_3,N_4)= N_1 N_3$.
Then, 
\begin{eqnarray*}
\sup_{\tau \in \R} \int_\R \int_M \chi(t)e^{it\tau} u_1 \overline{u_2} u_3
\overline{u_4} \ud x \ud t 
&\lesssim&  \| u_1 \overline{u_2} \|_{L^2} \| u_3 \overline{u_4} \|_{L^2} \\
&\lesssim& N_1^{s_0} \|f_1\|_{L^2} \|f_2\|_{L^2} *
N_3^{s_0} \|f_3\|_{L^2} \|f_4\|_{L^2} \\
&\lesssim& m(N_1, N_2,N_3,N_4)^{s_0} \|f_1\|_{L^2} \|f_2\|_{L^2}
\|f_3\|_{L^2} \|f_4\|_{L^2} \\
\end{eqnarray*}
\end{proof}

\begin{lmm}
Under the assumption (\ref{**}), 
for every $b>1/2$ and every $u_1$, $u_2$, $u_3$, $u_4$ $\in X^{0,b}$
whose spectra (relative to the space variable) lie respectively in $[N_1,2N_1]^d$, $[N_2,2 N_2]^d$, 
$[N_3, 2 N_3]^d$, $[N_4,2N_4]^d$, the following holds:
\begin{equation}
\int_\R \int_M u_1 \overline{u_2} u_3 \overline{u_4} \ud x \ud t \lesssim
m(N_1,N_2,N_3,N_4)^{s_0} \prod_{1 \le i \le 4} \|u_i\|_{X^{0,b}}. \label{***}
\end{equation}
\end{lmm}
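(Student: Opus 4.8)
The plan is to reduce the estimate \eqref{***} on $X^{0,b}$ functions to the estimate \eqref{**} on free solutions $e^{it\Delta}f_j$ by expressing each $u_i$ as a superposition of modulated free evolutions, using the defining identity of the $X^{0,b}$ norm. Concretely, write $v_i(t,x) = e^{-it\Delta}u_i(t,x)$, so that $u_i(t,x) = e^{it\Delta}v_i(t,x)$ and $\|u_i\|_{X^{0,b}} = \|v_i\|_{H^b_t L^2_x}$ (up to the harmless low-frequency weight, which is bounded since $b>1/2$ and the $H^s$ weight is trivial here as $s=0$). Taking the Fourier transform in $t$ only, $v_i(t,x) = \int_\R e^{it\tau_i}\widehat{v_i}(\tau_i,x)\,\ud\tau_i$, one gets
$$
u_i(t,x) = \int_\R e^{it\tau_i}\,e^{it\Delta}\widehat{v_i}(\tau_i,\cdot)(x)\,\ud\tau_i,
$$
and for each fixed $\tau_i$ the function $\widehat{v_i}(\tau_i,\cdot)$ has spatial spectrum in $[N_i,2N_i]^d$, exactly the hypothesis needed to apply \eqref{**}.

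Next I would substitute these representations into the left-hand side of \eqref{***} and interchange the $x,t$ integrals with the four $\tau_i$-integrals (justified by absolute convergence, which follows from the $H^b$ integrability once the weights are in place). This produces
$$
\int_{\R^4}\Big(\int_\R\int_M e^{it(\tau_1-\tau_2+\tau_3-\tau_4)}\,
(e^{it\Delta}\widehat{v_1}(\tau_1))\,\overline{(e^{it\Delta}\widehat{v_2}(\tau_2))}\,
(e^{it\Delta}\widehat{v_3}(\tau_3))\,\overline{(e^{it\Delta}\widehat{v_4}(\tau_4))}\,\ud x\,\ud t\Big)\prod_i\ud\tau_i.
$$
Here there is a technical point: \eqref{**} is stated with a compactly supported cutoff $\chi(t)$, whereas the time integral in \eqref{***} is over all of $\R$. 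I would handle this by first localizing $u_i$ in time — since the estimate is local, one may insert a fixed $\chi$ supported on a neighborhood of the relevant interval; alternatively, one observes that \eqref{**} combined with a partition of unity in $t$ (or a direct duality/$TT^*$ argument on the dispersive estimate) gives the version with $\chi\equiv 1$ on the support under consideration. Modulo this, the inner integral is bounded by \eqref{**} with $\tau = \tau_1-\tau_2+\tau_3-\tau_4$, yielding the pointwise bound
$$
m(N_1,N_2,N_3,N_4)^{s_0}\prod_{i=1}^4\|\widehat{v_i}(\tau_i,\cdot)\|_{L^2_x}.
$$

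It remains to integrate this over $(\tau_1,\dots,\tau_4)\in\R^4$ and recover $\prod_i\|u_i\|_{X^{0,b}}$. The main obstacle — and the only genuinely quantitative step — is this $\tau$-integration: one must show
$$
\int_{\R^4}\prod_{i=1}^4\|\widehat{v_i}(\tau_i,\cdot)\|_{L^2_x}\,\ud\tau_1\cdots\ud\tau_4 \lesssim \prod_{i=1}^4\|v_i\|_{H^b_t L^2_x}.
$$
This is exactly where $b>1/2$ is used: writing $\|\widehat{v_i}(\tau_i)\|_{L^2_x} = \langle\tau_i\rangle^{-b}\cdot\langle\tau_i\rangle^{b}\|\widehat{v_i}(\tau_i)\|_{L^2_x}$, by Cauchy–Schwarz in each $\tau_i$ variable the integral is bounded by $\big(\int\langle\tau\rangle^{-2b}\ud\tau\big)^{2}\prod_i\big(\int\langle\tau_i\rangle^{2b}\|\widehat{v_i}(\tau_i)\|_{L^2_x}^2\,\ud\tau_i\big)^{1/2}$, and $\int\langle\tau\rangle^{-2b}\ud\tau<\infty$ precisely because $2b>1$. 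The last product is $\prod_i\|v_i\|_{H^b_tL^2_x} = \prod_i\|u_i\|_{X^{0,b}}$, completing the proof. The argument is a standard "transfer principle" from free-solution (Strichartz-type) estimates to $X^{s,b}$ estimates; the dimension- and geometry-dependent content is entirely contained in \eqref{**}, and here it is merely propagated through.
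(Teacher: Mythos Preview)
Your proposal is correct and follows essentially the same route as the paper: write each $u_i$ as a superposition $\int e^{it\tau_i}e^{it\Delta}\widehat{v_i}(\tau_i,\cdot)\,\ud\tau_i$ of modulated free solutions, apply the free-solution estimate \eqref{**} pointwise in $\tau$, and then integrate in $\tau$ using Cauchy--Schwarz and the integrability of $\langle\tau\rangle^{-2b}$ for $b>1/2$. The paper handles the compact-support issue for $\chi$ by first assuming two of the $u_i$ are time-supported in $[0,1]$ and then invoking almost orthogonality to pass to the general case, which is the precise version of the time-localization/partition-of-unity step you allude to.
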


\begin{proof}
Assume first that $u_3$ and $u_4$ are supported in $[0,1]$ in the time variable and that 
$\chi=1$ in $[0,1]$.
Let $u_j^* = e^{-it\Delta} \hat{u_j}$ and take the Fourier transformation in time, one has:
\begin{eqnarray*}
\big(u_1 \overline{u_2}u_3 \overline{u_4} \big)(t) & = & \int_\R \int_\R \int_\R
\int_\R e^{it(\tau_1 -\tau_2 + \tau_3 - \tau_4)} 
e^{it\Delta}  u^*_1(\tau_1) \overline{ e^{it\Delta} u^*_2(\tau_2)}  \\
& & \hspace{2.5cm}e^{it\Delta} u^*_3(\tau_3) \overline{e^{it\Delta} u^*_4(\tau_4)} \frac{\ud \tau}{(2 \pi)^4} 
\end{eqnarray*}
By Funini :
\begin{eqnarray*}
I &:=& \int_\R \int_M u_1 \overline{u_2} u_3 \overline{u_4} \ud x \ud t \\ 
&\ =& \int ... \int \chi(t) e^{it(\tau_1
-\tau_2 + \tau_3 -\tau_4)} e^{it\Delta} u^*_1(\tau_1) \overline{ e^{it\Delta}u^*_2(\tau_2)})
e^{it\Delta} u^*_3(\tau_3) \\
&   & \hspace{2.5cm} \overline{e^{it\Delta} u^*_4(\tau_4)} \ud t \ud x  \frac{\ud \tau}{(2\pi)^4}.
\end{eqnarray*} 

The assumption (\ref{**}) says that :
$$
|I| \lesssim m(N_1,N_2,N_3,N_4)^{s_0} \int_\R \int_\R \int_\R \int_\R
\prod_{j} \|u_j\|_{L^2(M)}(\tau_j) \ud \tau.$$
Since $(1+ \tau^2)^b$ is integrable, using Cauchy-Schwartz inequality one
obtains (\ref{***}). 

In the general case, decomposing the support of  $u_3$ and $u_4$ and using
almost orthogonality, one proves (\ref{***}).
\end{proof}

\begin{lmm} \label{azerty}
$$
\int_\R \int_M u_1 \overline{u_2}  u_3 \overline{u_4} \ud x \ud t \lesssim 
m(N_1,...,N_4)^{d/2}
\prod_{1 \le i \le 4}\|u_i\|_{X^{0,1/4}}
$$
\end{lmm}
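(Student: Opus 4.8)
The plan is to deduce Lemma~\ref{azerty} from Lemma~5 (the estimate (\ref{***})) by interpolation in the parameter $b$, together with the trivial $L^4$--type bound at $b=0$. More precisely, I would set up a bilinear/multilinear operator $T$ acting on the quadruple $(u_1,u_2,u_3,u_4)$ by
$$
T(u_1,u_2,u_3,u_4) = \int_\R\int_M u_1\overline{u_2}u_3\overline{u_4}\,\ud x\,\ud t,
$$
and view it as a multilinear form on the scale of spaces $X^{0,b}$. Lemma~5 gives, for each $b>1/2$,
$$
|T(u_1,u_2,u_3,u_4)| \lesssim m(N_1,\dots,N_4)^{s_0}\prod_{i}\|u_i\|_{X^{0,b}},
$$
with $s_0$ the Strichartz exponent, in particular $s_0\le d/2$ in every case. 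At the other endpoint, $b=0$, one has the crude bound obtained by Sobolev/Bernstein in the space variable: since the spectrum of $u_j$ lies in a box of size $N_j$, $\|e^{it\Delta}f_j\|_{L^\infty_x}\lesssim N_j^{d/2}\|f_j\|_{L^2_x}$, and hence by Plancherel in $t$ one controls $T$ by $m(N_1,\dots,N_4)^{d/2}$ times a product of $L^2_{t,x}$-type norms, i.e.\ by $\prod_i\|u_i\|_{X^{0,0}}$ up to the same factor. Interpolating these two endpoint estimates — the first at any $b_1>1/2$ with gain $m^{s_0}$, the second at $b_0=0$ with loss $m^{d/2}$ — along the complex interpolation scale of the $X^{0,b}$ spaces yields, for the intermediate value $b=1/4$ (obtained with weight $1/2$ on each endpoint when $b_1$ is taken close to $1/2$), a bound of the form $m^{(s_0+d/2)/2+\epsilon}$. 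Since in all the listed regimes $s_0 < d/2$, this is $\lesssim m^{d/2}$, which is what is claimed.

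The key steps, in order, are: (i) record the $X^{0,b}$-interpolation fact that the complex interpolation space $[X^{0,b_0},X^{0,b_1}]_\theta$ is $X^{0,(1-\theta)b_0+\theta b_1}$ — this is standard since $X^{0,b}=e^{it\Delta}H^b_t(L^2_x)$ is a weighted-$L^2$ space in the dual Fourier variables; (ii) establish the $b=0$ endpoint via Bernstein in $x$ as above, being careful that it is $m(N_1,\dots,N_4)$ — the product of the two \emph{smallest} $N_j$ — that appears, which comes from pairing the two factors of smallest spectral support in the $L^\infty_x$ bound and the remaining two in $L^2_x$, exactly as in the proof of Lemma~5; (iii) invoke Lemma~5 for the $b>1/2$ endpoint; (iv) apply multilinear complex interpolation to the form $T$ to land at $b=1/4$; (v) observe that $(s_0+d/2)/2 \le d/2$ and that any $\epsilon$-loss can be absorbed since one may take $b_1$ arbitrarily close to $1/2$ and, if $s_0$ itself carries an $\epsilon$, enlarge it harmlessly below $d/2$.

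The main obstacle I expect is purely bookkeeping rather than conceptual: one must check that the two smallest-$N_j$ structure is genuinely preserved under interpolation, i.e.\ that in \emph{both} endpoint estimates the controlling factor is the \emph{same} quantity $m(N_1,\dots,N_4)$ (product of the two smallest frequencies), so that interpolation produces a power of $m$ rather than some mismatched product of frequency gains. For the $b>1/2$ endpoint this is exactly (\ref{***}); for the $b=0$ endpoint one arranges, by relabelling, that the $L^\infty_x$ norms are taken on the two factors carrying the two largest spectra and the $L^2_x$ norms on the two carrying the two smallest, so that only $m(N_1,\dots,N_4)^{d/2}$ is lost, matching the exponent on $m$ in the other endpoint (up to replacing $s_0$ by $d/2$). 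Once that is in place, multilinear complex interpolation applies verbatim and gives the stated inequality with the $X^{0,1/4}$ norm on the right. A secondary, minor point is that $X^{0,b}$ for $b=1/4<1/2$ no longer embeds in $\mathcal C^0_t L^2_x$, so one cannot appeal to pointwise-in-time arguments; but the estimate (\ref{***}) and the $b=0$ bound are both stated as space-time integral inequalities, so no such embedding is needed and the interpolation is legitimate.
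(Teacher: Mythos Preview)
Your interpolation scheme has a genuine gap at the $b=0$ endpoint. The claimed inequality
\[
\Big|\int_{\R}\int_M u_1\overline{u_2}u_3\overline{u_4}\,\ud x\,\ud t\Big|
\lesssim m(N_1,\dots,N_4)^{d/2}\prod_{i}\|u_i\|_{X^{0,0}}
= m(N_1,\dots,N_4)^{d/2}\prod_{i}\|u_i\|_{L^2_{t,x}}
\]
is false for general $u_i\in L^2_{t,x}$ with the stated spatial spectral localization. Bernstein in $x$ converts $L^\infty_x$ to $L^2_x$ with the loss $N_j^{d/2}$, but it does nothing in the time variable: after H\"older in $x$ you are left with $\int_\R \prod_j\|u_j(t)\|_{L^2_x}\,\ud t$, which requires each factor in $L^4_t$, not $L^2_t$. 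A concrete counterexample is $u_1=\dots=u_4=\phi(t)\psi(x)$ with fixed $\psi$ (so all $N_j\sim 1$) and $\phi_n(t)=n^{1/2}\mathbf{1}_{[0,1/n]}(t)$: then $\|\phi_n\|_{L^2_t}=1$ while $\int|\phi_n|^4\,\ud t=n\to\infty$. ``Plancherel in $t$'' does not rescue this; there is no bilinear/quadrilinear identity that turns the four-fold time integral into an $L^2_t$ estimate. Since one endpoint of your interpolation is unavailable, the argument does not close.

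The paper's proof is direct and avoids Lemma~5 entirely: by H\"older in $x$ place $L^\infty_x$ on the two factors with the smallest $N_j$ and $L^2_x$ on the other two, apply Bernstein to lose $m(N_1,\dots,N_4)^{d/2}$, and then bound each resulting $\|u_j\|_{L^4_tL^2_x}$ by $\|u_j\|_{X^{0,1/4}}$ via the one-dimensional Sobolev embedding $H^{1/4}_t\hookrightarrow L^4_t$ applied to $e^{-it\Delta}u_j(t)$. In other words, the $1/4$ in $X^{0,1/4}$ is exactly what is needed to reach $L^4_t$, and no interpolation is involved. Note also that in the paper Lemma~\ref{azerty} serves as the \emph{second} endpoint (low $b$, large spatial loss) to be interpolated against Lemma~5 in the proof of Lemma~7; trying to derive it from Lemma~5 would, even if it worked, make the scheme redundant rather than supply the independent crude estimate that is actually needed.
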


\begin{proof}
Suppose that $m(N_1,...,N_4)= N_1 N_3$, from Hölder and Sobolev inequalities:

\begin{eqnarray*}  
\int_\R \int_M u_1 \overline{u_2}  u_3 \overline{u_4} \ud x \ud t
&\lesssim& \|u_1 \overline{u_2}\|_{L^2 L^2}  \|u_3 \overline{u_4}\|_{L^2 L^2} \\
&\leq& C \|u_1\|_{L^4 L^\infty} \|u_2\|_{L^4 L^2} \|u_3\|_{L^4 L^\infty}
\|u_4\|_{L^4 L^2} \\
&\leq& C (N_1 N_3)^{d/2} \|u_1\|_{L^4 L^2} \|u_2\|_{L^4 L^2}
\|u_3\|_{L^4 L^2} \|u_4\|_{L^4 L^2} \nonumber \\
&\leq& C (N_1 N_3)^{d/2} \prod_{1 \le i \le 4} \|u_i\|_{X^{0,1/4}},
\end{eqnarray*}
using the embedding $X^{0,1/4} \subset L^4L^2$ (Sobolev injection in $t$ applied to 
$e^{-it\Delta}u(t)$).
\end{proof}

\begin{lmm} \label{L}
For every $s>s_0$, there exists $b<1/2$ such that (\ref{***}) holds (with $s$ replacing $s_0$).
\end{lmm}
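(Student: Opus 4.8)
The plan is to interpolate between the two endpoint estimates that have already been established: the sharp bound \eqref{***} from Lemma~6, valid for $b>1/2$ with exponent $m^{s_0}$, and the lossy bound from Lemma~\ref{azerty}, valid at $b=1/4$ with exponent $m^{d/2}$. The key observation is that multilinear estimates of this type are amenable to complex interpolation in the parameter $b$: fixing the spatial frequency localizations $N_1,\dots,N_4$, the left-hand side of \eqref{***} is a fixed multilinear form in $(u_1,\dots,u_4)$, and the $X^{0,b}$ norms form an analytic family of norms indexed by $b$. So I would set up an analytic family of multilinear operators, apply the multilinear Stein interpolation theorem (or, more elementarily, interpolate the two a~priori inequalities directly since only the $b$-index changes), and read off the intermediate estimate.

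Concretely, first I would record the two inputs in a common form: for $b_1>1/2$,
$$
\int_\R \int_M u_1 \overline{u_2} u_3 \overline{u_4}\,\ud x\,\ud t \lesssim m^{s_0}\prod_{i=1}^4 \|u_i\|_{X^{0,b_1}},
$$
and for $b_0=1/4$,
$$
\int_\R \int_M u_1 \overline{u_2} u_3 \overline{u_4}\,\ud x\,\ud t \lesssim m^{d/2}\prod_{i=1}^4 \|u_i\|_{X^{0,b_0}}.
$$
Next, for $\theta\in(0,1)$ let $b=(1-\theta)b_0+\theta b_1$, so that $X^{0,b}=[X^{0,b_0},X^{0,b_1}]_\theta$ with equivalent norms (this is just the characterization of $H^b_t$ as a complex interpolation space of $H^{b_0}_t$ and $H^{b_1}_t$, transported by the unitary $e^{-it\Delta}$). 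Multilinear complex interpolation then yields
$$
\int_\R \int_M u_1 \overline{u_2} u_3 \overline{u_4}\,\ud x\,\ud t \lesssim m^{(1-\theta)d/2+\theta s_0}\prod_{i=1}^4\|u_i\|_{X^{0,b}}.
$$
Finally I would choose the parameters: given $s>s_0$, pick $b_1>1/2$ close enough to $1/2$ and then $\theta<1$ close enough to $1$ so that on one hand $b=(1-\theta)/4+\theta b_1<1/2$, and on the other hand $(1-\theta)d/2+\theta s_0 < s$. Both are possible because as $\theta\to1^-$ and $b_1\to (1/2)^+$ the exponent $(1-\theta)d/2+\theta s_0$ tends to $s_0<s$ while $b$ tends to a value $\le 1/2$; a moment's bookkeeping shows the two constraints are jointly satisfiable (e.g. first fix $\theta$ with $(1-\theta)d/2+\theta s_0<s$, then $b_1-1/2$ small forces $b<1/2$). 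This gives \eqref{***} with $s$ in place of $s_0$ and some $b<1/2$, as claimed.

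The only genuinely delicate point is the interpolation of the multilinear form against the $X^{0,b}$ scale, i.e.\ justifying that $X^{0,b}$ is the complex interpolation space between $X^{0,1/4}$ and $X^{0,b_1}$ with the expected dependence of constants, and that the multilinear form behaves well under the analytic-family machinery. Since $u\mapsto e^{-it\Delta}u$ is an isometry from $X^{0,b}$ to $H^b_t L^2_x$, this reduces to the standard fact $[H^{b_0}_t,H^{b_1}_t]_\theta = H^b_t$, so the obstacle is really only notational rather than substantive; once that identification is in hand, the rest is the choice of parameters above. One should also note that the spectral localizations $[N_i,2N_i]^d$ are preserved under all the operations involved, so the constant in the interpolated inequality still depends on the $N_i$ only through $m(N_1,\dots,N_4)$.
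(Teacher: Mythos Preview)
Your argument is correct and rests on the same two inputs as the paper's proof (Lemma~5 for $b>1/2$ with exponent $m^{s_0}$, and Lemma~\ref{azerty} for $b=1/4$ with exponent $m^{d/2}$), and the same idea of interpolating between them. The implementation differs: the paper does not invoke abstract multilinear interpolation but instead performs a dyadic decomposition of each $u_j$ in the modulation variable $K_j\sim 1+|i\partial_t+\Delta|$, applies the two endpoint bounds to each piece $u_{j,K_j}$ (for which $\|u_{j,K_j}\|_{X^{0,\beta}}\sim K_j^{\beta}\|u_{j,K_j}\|_{L^2}$), and then uses the elementary inequality $\min(A,B)\le A^{\theta}B^{1-\theta}$ term by term before resumming. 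Your route is cleaner and avoids the bookkeeping of the dyadic sums, at the price of quoting the multilinear complex interpolation theorem and the identification $[X^{0,b_0},X^{0,b_1}]_\theta=X^{0,(1-\theta)b_0+\theta b_1}$; the paper's route is more self-contained and makes the mechanism explicit. Either way, the parameter choice at the end is exactly as you describe.
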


\begin{proof}
Decompose $u_j$ as follows:
\begin{eqnarray*}
u_j &= &\sum_{K_j} u_{j,K_j} \\
u_{j,k_j} & = &\mathbf{1}_{K_j\leq 1+
  |i\partial_t + \Delta| \leq  K_{j+1}} (u_j)
\end{eqnarray*}
where $K_j$ are dyadic integers.
Hence,
$$
\|u_j\|²_{X^{0,b}} \simeq \sum_{K_j} K_j^{2b} \|u_{j,K_j}\|²_{L^2} \simeq
\sum_{K_j} \|u_{j,K_j}\|_{X^{0,b}}
$$
Then, if $b>1/2$, the previous two lemme can be interpreted as
\begin{eqnarray*}
|I(u_1,u_2,u_3,u_4)| & \leq &
  C m(N_1,N_2,N_3,N_4)^{s_0} \\
& & \sum_{K_1,K_2,K_3,K_4} (K_1K_2K_3K_4)^{b} \prod_j \|u_{j,K_j}\|²_{L^2}
\end{eqnarray*}
and
\begin{eqnarray*}
|I(u_1,u_2,u_3,u_4)| & \leq & C m(N_1,N_2,N_3,N_4)^{d/2} \\
& & \sum_{K_1,K_2,K_3,K_4}
(K_1K_2K_3K_4)^{1/4} \prod_j \|u_{j,k_j}\|²_{L^2}
\end{eqnarray*}
Hence for  $s>s_0$, one can choose $b$ sufficiently close to $1/2$ and interpolate between the two inequalities to obtain :
\begin{eqnarray*}
|I(u_1,u_2,u_3,u_4)| & \leq & C m(N_1,N_2,N_3,N_4)^{s} \\
& & \sum_{K_1,K_2,K_3,K_4}
(K_1K_2K_3K_4)^{b} \prod_j \|u_{j,k_j}\|²_{L^2} 
\end{eqnarray*}
with $b<1/2$.
\end{proof}

\begin{lmm} \label{Gabriel}
If $b$ and $b'$ are such that $0 \leq b' < 1/2$ and $0 \leq b+b' < 1$ and if
$T \in [0,1]$,  then there exists C such that if $w(t)=\int_0^t S(t-t') f(t')
dt' $
then $\|w\|_{X_T^{s,b}} \leq C T^{1-b-b'} \|f\|_{X_T^{s,-b'}}$.
\end{lmm}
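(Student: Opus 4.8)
The plan is to reduce the estimate for the Duhamel operator $w(t)=\int_0^t S(t-t')f(t')\,dt'$ to the corresponding estimate on the whole line and to a one-dimensional Sobolev-type inequality in the time variable. First I would conjugate by the free evolution: writing $v(t)=e^{-it\Delta}w(t)$ and $g(t)=e^{-it\Delta}f(t)$, the definition of $X^{s,b}$ shows that $\|w\|_{X_T^{s,b}}=\|v\|_{H^b_t(H^s_x)}$ and $\|f\|_{X_T^{s,-b'}}=\|g\|_{H^{-b'}_t(H^s_x)}$, while $v(t)=\int_0^t g(t')\,dt'$. Since the spatial variable only enters through the fixed weight $(1-\Delta)^{s/2}$, which commutes with everything in sight, it suffices to prove the scalar statement: for $0\le b'<1/2$ and $0\le b+b'<1$ and $T\in[0,1]$, the map $g\mapsto \1_{[0,T]}(t)\int_0^t g$ is bounded from $H^{-b'}$ to $H^b$ with norm $\lesssim T^{1-b-b'}$ (applied with values in the Hilbert space $H^s_x$).

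The key steps for the scalar statement are as follows. I would pick a cutoff $\psi\in C_c^\infty(\R)$ with $\psi\equiv 1$ on $[-1,1]$, and replace $g$ by $\psi(t/T)g$ where appropriate so that only the values of $g$ on $[0,T]$ matter; more precisely one works with an extension of $g$ realizing the $X_T^{s,-b'}$ norm up to a constant and then multiplies the output by a spatial-independent time cutoff. The core inequality to establish is
\begin{equation*}
\Big\|\psi(t/T)\int_0^t \eta(t')g(t')\,dt'\Big\|_{H^b_t}\lesssim T^{1-b-b'}\|\eta(\cdot/T)g\|_{H^{-b'}_t},
\end{equation*}
for suitable fixed cutoffs $\psi,\eta$. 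One splits the time integral as $\int_\R \1_{[0,t]}(t')g(t')\,dt'$; the operator $g\mapsto \int_0^t g$ is, up to the Heaviside factor, a convolution, and the Heaviside function has Fourier transform $\text{p.v.}\,1/(i\tau)+\pi\delta$, which is where the two regimes come from. On the Fourier side the principal-value piece gains one derivative: it maps $H^{-b'}$ to $H^{1-b'}$, hence to $H^b$ since $b<1-b'$. The local-in-time cutoff $\psi(t/T)$ then supplies the gain $T^{1-b-b'}$ by the standard scaling/interpolation argument: multiplication by $\psi(\cdot/T)$ maps $H^{\sigma}$ to $H^{\sigma'}$ with norm $\lesssim T^{\sigma-\sigma'}$ when $-1/2<\sigma'\le\sigma<1/2$ (this is where the hypotheses $b'<1/2$ and $b+b'<1$, which guarantee we stay in the admissible range of exponents after losing a derivative, are used). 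The delta piece $\pi\delta(\tau)$ corresponds to $g\mapsto \tfrac{1}{2}\big(\int_\R g\big)$, a constant in $t$; multiplying this constant by $\psi(t/T)$ and estimating in $H^b$ against $\|g\|_{H^{-b'}}$ is again handled by the same scaling lemma together with $|\int g|\lesssim \|\1_{[0,T]}g\|_{H^{-b'}}\lesssim T^{\text{something}}\|g\|_{H^{-b'}}$ by Cauchy--Schwarz in Fourier, since $b'<1/2$ makes $(1+\tau^2)^{-b'}$ have the right local integrability after the $T$-rescaling.

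The main obstacle, and the only genuinely delicate point, is tracking the exact power of $T$ through the decomposition, since the $\psi(t/T)$ cutoff must absorb exactly $1-b-b'$ powers while the Heaviside convolution contributes no $T$ by itself. I would handle this by rescaling time $t=Ts$ at the very beginning, reducing to $T=1$ on the unit interval; the scaling of the $H^{\sigma}_t$ norms under $t\mapsto Ts$ then produces the factor $T^{1-b-b'}$ cleanly, at which point the $T=1$ inequality is just the statement that $g\mapsto \1_{[0,1]}\int_0^t g$ is bounded $H^{-b'}\to H^b$, which follows from the Fourier-support discussion above plus the multiplier mapping $H^{-b'}\to H^{1-b'}\hookrightarrow H^b$ and the endpoint bookkeeping with the fixed cutoffs. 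I expect the constant $C$ to depend only on $b$, $b'$ and the fixed choice of cutoffs, uniformly in $T\in[0,1]$, which is all that is needed. For completeness I would remark that this lemma is standard (cf. \cite{Bou3}, \cite{BGT2}, \cite{Z}) and the above is just a sketch of the argument.
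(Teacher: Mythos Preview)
The paper does not actually prove this lemma; its entire proof reads ``See \cite{G} for a proof of this lemma.'' Your sketch is precisely the standard argument found in Ginibre's expos\'e (and reproduced in \cite{BGT2}, \cite{Z}): conjugate by $e^{-it\Delta}$ to reduce to the one-dimensional time estimate for $g\mapsto\int_0^t g$, analyze this via the Fourier transform of the Heaviside function, and extract the factor $T^{1-b-b'}$ by rescaling (equivalently, by splitting at $|\tau|\sim 1/T$). Since both you and the paper ultimately defer to the same well-known computation, there is nothing further to compare.
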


\begin{proof}
See \cite{G} for a proof of this lemma
\end{proof}

\begin{lmm} \label{zoinx}
If $s>s_0$ then there exist $b$ and $b'$ such that $0<b'<1/2<b$ and $b+b'<1$
so that :
$$ 
\|u_1 \overline{u_2} u_3\|_{X_T^{s,-b'}} \lesssim \|u_1\|_{X_T^{s,b}}
\|u_2\|_{X_T^{s,b}} \|u_3\|_{X_T^{s,b}} 
$$
\end{lmm}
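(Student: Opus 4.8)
The plan is to reduce the trilinear estimate in $X_T^{s,b}$ to the tetralinear estimate \eqref{***} (with $s$ in place of $s_0$, which is available by Lemma~\ref{L}) by a duality argument. Concretely, by definition of the $X_T^{s,-b'}$ norm as an infimum over extensions, it suffices to bound
$$
\left|\int_\R\int_M u_1\overline{u_2}u_3\,\overline{u_4}\,\ud x\,\ud t\right|
$$
for an arbitrary $u_4\in X^{s',b'}$ with $s'=-s$ and test against it; after pairing, the left side of the desired inequality is the supremum over $\|u_4\|_{X^{-s,b'}}\le 1$ of this integral. The first step is therefore to set up this duality carefully, taking into account that the dual of $X^{s,-b'}$ (restricted to $[0,T]$) is $X^{-s,b'}$, so the factor $\|u_4\|$ that appears from \eqref{***} will be $\|u_4\|_{X^{0,b}}$ with weights that must be absorbed into the $H^{-s}$ regularity of $u_4$.

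Second, I would perform the standard Littlewood--Paley decomposition in the space variable: write $u_j=\sum_{N_j}u_{j,N_j}$ with $u_{j,N_j}$ having spatial spectrum in a dyadic annulus $|\xi|\sim N_j$, and similarly dyadically in the dual time variable as in the proof of Lemma~\ref{L}. For each dyadic tuple $(N_1,N_2,N_3,N_4)$ apply \eqref{***} (in its $s$-version), which gives a gain of $m(N_1,\dots,N_4)^{s}$ times $(K_1K_2K_3K_4)^{b}\prod\|u_{j,K_j}\|_{L^2}$ with $b<1/2$. The key combinatorial point is that $m(N_1,\dots,N_4)$, the product of the two smallest frequencies, is always $\lesssim$ the product of the two largest; hence $m(N_1,\dots,N_4)^{s}\lesssim (N_1N_2N_3N_4)^{s}/(\max N_j)^{2s}$-type bounds let one distribute the Sobolev weights: one puts $\langle N_j\rangle^{s}$ on the three "input" factors $u_1,u_2,u_3$ and $\langle N_4\rangle^{-s}$ on the test factor, and the remaining frequency gain is summable because the two smallest frequencies control $m$. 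Summing the geometric series in the $N_j$ and in the $K_j$ (using $b<1/2$ for the $K$-sum and the frequency gap between smallest and largest for the $N$-sum, with the usual care when two or more frequencies are comparable, where one sums in the smallest).

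Third, the remaining freedom is to choose $b'$: Lemma~\ref{L} only fixes $b$ slightly below $1/2$, and we are free to take $b$ slightly above $1/2$ in the statement here by a harmless loss (the dyadic $K$-sums still converge as long as the total exponent on $(K_1K_2K_3K_4)$ stays below $2\cdot(1/2)\cdot 2=2$ distributed appropriately, i.e.\ each factor gets exponent $<1/2$), and then pick $b'>0$ small enough that $b+b'<1$; this is exactly the pair $(b,b')$ needed downstream for Lemma~\ref{Gabriel}. I expect the main obstacle to be bookkeeping rather than anything deep: verifying that after duality the weight $\langle N_4\rangle^{-s}$ on the dual factor is genuinely available (this needs $u_4$ only in $X^{-s,b'}$, consistent with the dual of $X^{s,-b'}$), and checking the borderline cases in the frequency summation where the two smallest $N_j$ coincide or where $\max N_j$ is attained by one of the first three factors — in that last case the weight $\langle\max N_j\rangle^{s}$ sits on an input and one must use $s>s_0\ge 0$ together with $m\lesssim (\text{product of two largest})$ to still extract a negative power of the largest frequency. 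Once these cases are handled uniformly, summing all the dyadic series yields the claimed trilinear bound with $b<1/2<\ldots$ replaced by a valid pair $0<b'<1/2<b$, $b+b'<1$.
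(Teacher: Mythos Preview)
Your overall plan---duality to reduce to a tetralinear bound, dyadic decomposition in space frequency, and invoking Lemma~\ref{L} with some $s'\in(s_0,s)$---matches the paper's route. But there is a genuine gap, and it is precisely in the case you did \emph{not} flag. You worry about $\max N_j$ landing on one of the inputs $u_1,u_2,u_3$; that case is in fact harmless, since the weight $\langle N_j\rangle^{s}$ on an input supplies decay. The dangerous case is when $N_4$ (the dual factor, carrying weight $\langle N_4\rangle^{-s}$) is the largest. There the prefactor after redistributing weights is $(N_1N_2)^{s'-s}(N_4/N_3)^{s}$ with $N_4\gg N_3$, which \emph{grows} in $N_4$, and no combinatorial trick on $m$ will save you: your claimed inequality $m\lesssim (N_1N_2N_3N_4)/(\max N_j)^{2}$ is simply false (take $N_j=2^j$), and even if it held it would not yield summability in $N_4$. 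The paper closes this gap by invoking Lemma~\ref{zoin} (the spectral cluster/near-orthogonality estimate from \cite{BGT2}), which gives $|J(N_1,\dots,N_4)|\lesssim N_4^{-p}$ for any $p$ when $N_4>C(N_1+N_2+N_3)$; on the torus one could equally observe that the integral is identically zero in that range by Fourier support. Either way, this ingredient is essential and missing from your sketch.

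A smaller point: your discussion of the exponents $b,b'$ is muddled. You do \emph{not} need to push the $b$ from Lemma~\ref{L} above $1/2$. The paper applies Lemma~\ref{L} at regularity $s'$ to obtain some $\tilde b<1/2$; this $\tilde b$ becomes the $b'$ of Lemma~\ref{zoinx} (via the duality $(X^{s,-b'})^*=X^{-s,b'}$), and then $b$ is chosen freely in $(1/2,\,1-b')$ at the very end, since $\|u_j\|_{X^{s,b'}}\le\|u_j\|_{X^{s,b}}$ trivially. No interpolation or ``harmless loss'' in $b$ is needed for this step.
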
 
 
To prove Lemma {\ref {zoinx}}, we need the following inequality, (see \cite{BGT2} for a proof).
\begin{lmm} \label{zoin}
Let $P_\lambda$ ($\lambda\geq 0$) be the orthogonal projection onto $\text {ker}\, (-\Delta-\lambda^2)$. 
There exists $C>0$ such that 
if $0\leq\lambda_j\leq\lambda_4$ for $j=1,2,3$ then for every $p>0$, there exists
$C_p$ so that for every $w_j \in L^2(M)$ 
\begin{multline}
\int_M P_{\lambda_1}(w_1) P_{\lambda_2}(w_2) P_{\lambda_3}(w_3) P_{\lambda_4}(w_4) dx
\leq \\ C_p \lambda_{4}^{-p} \|w_1\|_{L^2} \|w_2\|_{L^2} \|w_3\|_{L^2} \|w_4\|_{L^2}.\nonumber
\end{multline}
\end {lmm}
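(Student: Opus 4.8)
We follow \cite{BGT2}; here is the plan. Since $P_{\lambda_4}w_4$ lies in the eigenspace of $-\Delta$ for the eigenvalue $\lambda_4^2$, while the product $v:=P_{\lambda_1}w_1\cdot P_{\lambda_2}w_2\cdot P_{\lambda_3}w_3$ is built from frequencies of size $\lesssim\lambda_1+\lambda_2+\lambda_3$, the two are almost orthogonal once $\lambda_4$ is large compared with $\lambda_1+\lambda_2+\lambda_3$ --- which is the only regime in which the asserted rapid decay has content (and the one used in Lemma \ref{zoinx}). So I would first reduce to $\lambda_1+\lambda_2+\lambda_3\ll\lambda_4$, in particular $\lambda_4>0$, and then extract the gain by repeated integration by parts.

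Concretely, writing $I=\int_M v\,\overline{P_{\lambda_4}w_4}\,\ud x$ and using $-\Delta(P_{\lambda_4}w_4)=\lambda_4^2\,P_{\lambda_4}w_4$, I would replace $P_{\lambda_4}w_4$ by $\lambda_4^{-2k}(-\Delta)^k P_{\lambda_4}w_4$ for an integer $k$ to be chosen at the end, and integrate by parts $2k$ times on the closed manifold $M$, so that $I=\lambda_4^{-2k}\int_M\big((-\Delta)^k v\big)\,\overline{P_{\lambda_4}w_4}\,\ud x$. Then I would expand $(-\Delta)^k v$ by the Leibniz rule $\Delta(fg)=(\Delta f)g+f(\Delta g)+2\,\nabla f\cdot\nabla g$ into a sum of at most $C^k$ terms, each a product over $i=1,2,3$ of factors of the form $(-\Delta)^{a_i}\nabla^{b_i}P_{\lambda_i}w_i$ with $\sum_i(2a_i+b_i)=2k$; on each factor I would invoke $\|(-\Delta)^{a}\nabla^{b}P_\lambda w\|_{L^6}\lesssim\lambda^{2a+b}\|P_\lambda w\|_{L^6}$ together with Sogge's spectral cluster estimate $\|P_\lambda w\|_{L^6(M)}\lesssim(1+\lambda)^{\sigma_d}\|w\|_{L^2}$, $\sigma_d$ the Sogge exponent (finite, depending only on $d$), and bound the term in $L^2(M)$ by H\"older with exponents $(6,6,6)$. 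Pairing the result with $P_{\lambda_4}w_4$ in $L^2$ gives
$$
|I|\ \lesssim\ C^k\,\lambda_4^{-2k}\,(\lambda_1+\lambda_2+\lambda_3)^{2k}\,(\lambda_1\lambda_2\lambda_3)^{\sigma_d}\prod_{j=1}^4\|w_j\|_{L^2},
$$
and since $\lambda_1+\lambda_2+\lambda_3\ll\lambda_4$ (a polynomial separation being what one actually uses here), choosing $k=k(p,d)$ large enough makes the $\lambda_4^{-2k}$ factor dominate and yields $|I|\lesssim C_p\,\lambda_4^{-p}\prod_j\|w_j\|_{L^2}$.

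The step I expect to be the real obstacle is controlling $v$: it is a product of three eigenfunctions that a priori lie only in $L^2(M)$, and since $L^2\cdot L^2\cdot L^2\not\subset L^2$ for $d\ge2$ one cannot simply multiply $L^2$ norms; this is also the only place where the geometry of $M$ enters beyond the eigenvalue equation, and it is exactly here that Sogge's $L^p$ estimates for spectral clusters (and their derivative counterparts) are needed, to place each factor in $L^6$. The remaining work is bookkeeping: checking that every term in the Leibniz expansion carries exactly $2k$ derivatives --- so as to produce a clean factor $(\lambda_1+\lambda_2+\lambda_3)^{2k}$ --- and that the number of such terms grows only geometrically in $k$, which is what allows the gain to survive as $k\to\infty$.
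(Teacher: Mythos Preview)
The paper does not supply its own proof of this lemma but simply refers to \cite{BGT2}, and your proposal is precisely the integration-by-parts argument given there (trade $P_{\lambda_4}w_4$ for $\lambda_4^{-2k}(-\Delta)^kP_{\lambda_4}w_4$, move the Laplacians onto the triple product, expand by Leibniz, and control each factor via Sogge-type eigenfunction $L^p$ bounds), so you are fully aligned with the intended proof. You also correctly observe that the lemma as stated is missing the separation hypothesis $\lambda_4\gg\lambda_1+\lambda_2+\lambda_3$, which is exactly the regime in which the paper actually invokes it (the cases $N_4>CN_3$ in the proof of Lemma~\ref{zoinx} and $N_0\ge N_1+N_2+N_3$ in the proof of Proposition~5).
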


\noindent{\it Proof of Lemma 9.}
By a duality argument, one only needs to prove:
$$
\bigg|\int_{\R \times M} u_1 \overline{u_2} u_3 \overline{u_4} dx \ dt \bigg|
\lesssim \|u_1\|_{X^{s,b}} \|u_2\|_{X^{s,b}} \|u_3\|_{X^{s,b}} \|u_4\|_{X^{-s,b'}}.
$$
Decomposing the four function as :
\begin{eqnarray*}
u_j & = & \sum_{N_j} u_{j, N_j} \\
u_{j, N_j} & = & \mathbf(1)_{\sqrt{1-\Delta} \in [N_j,2N_j]} (u_j)
\end{eqnarray*}
with $N_j$ being dyadic integers.
Then the integral can be writen as the sum of terms of the form: 
$$
J(N_1,N_2,N_3,N_4)=\int_{\R \times M} u_{1,N_1} \overline{u_{2,N_2}} u_{3,N_3}
\overline{u_{4,N_4}} dx \ dt .
$$
Without restrictions, suppose that $N_1 \leq N_2 \leq N_3$.
Let $s'$ be such that $s_0<s'<s$.
Lemma 7 gives $b'<1/2$ so that 
$$
|J(N_1,N_2,N_3,N_4)| \lesssim (N_1 N_2)^{s'} \|u_1\|_{X^{0,b'}}
\|u_2\|_{X^{0,b'}} \|u_3\|_{X^{0,b'}} \|u_4\|_{X^{0,b'}} 
$$
Hence,
\begin{multline*}
\bigg|\int_{\R \times M} u_1 \overline{u_2} u_3 \overline{u_4} dx \ dt \bigg|
 \lesssim \\ \sum_{N_j}  (N_1 N_2)^{(s'-s)} (\frac{N_4}{N_3})^{s}
\prod_{j=1}^3\|u_{j,N_j}\|_{X^{s,b'}} \|u_{4,N_4}\|_{X^{-s,b'}}.
\end{multline*}

We distinguish two types of terms, the ones with $N_4 \leq C
N_3$ and the others. For the first ones, we use Cauchy-Schwarz inequality to bound by :
$$
(\sum_{N_j} (N_1 N_2)^{2(s'-s)} (\frac{N_4}{N_3})^{2s} \sum_{N_j} 
\prod_{j=1}^4\|u_{j,N_j}\|²_{X^{s,b'}} \|u_{4,N_4}\|²_{X^{-s,b'}})^{1/2}. 
$$ 
The prefactor is bounded by :
$$
(\sum_{N_1} N_1^{s'-s} \sum_{N_2} N_2^{(s'-s)/2} \sum_{N_3}
(N_3^{(s'-s)/2} \sum_{N_4 \leq C N_3} (\frac{N_4}{N_3})^{2s} ) )^{1/2}.
$$
Every series is bounded, since the sum is over dyadic integers, the fourth series is equivalent to its last term $N_3^{2s} / N_3^{2s}$.
For the terms so that  $N_4 > C N_3$, we use Lemma {\ref{zoin}}.

Let $p>s$, one has:
\begin{eqnarray*}
|J(N_1,N_2,N_3,N_4)|
&\lesssim& N_4^{-p} \int_\R \prod_j \|u_{j,N_j}(t)\|_{L²} dt \nonumber \\
&\lesssim& N_4^{-p} \prod_j \|u_{j,N_j}(t)\|_{L^4 L^2} \nonumber \\
&\lesssim& N_4^{-p} \prod_j \|u_{j,N_j}(t)\|_{X^{0,1/4}} \nonumber \\
&\lesssim& N_4^{-p} \prod_j \|u_{j,N_j}(t)\|_{X^{0,b'}} \nonumber \\
&\lesssim& (N_1 N_2 N_3)^{-s} N_4^{-p+s} \|u_{1,N_1}(t)\|_{X^{s,b'}}
\\ & & \hspace{5mm} \|u_{2,N_2}(t)\|_{X^{s,b'}} \|u_{3,N_3}(t)\|_{X^{s,b'}} \|u_{4,N_4}(t)\|_{X^{-s,b'}}.
\end{eqnarray*}

Using Cauchy-Schwarz, the above is bounded by 
$$\|u_1\|_{X^{s,b'}}
\|u_2\|_{X^{s,b'}} \|u_3\|_{X^{s,b'}} \|u_4\|_{X^{-s,b'}}.$$
Choosing $b$ such that $1/2<b<1-b'$ concludes the proof.
\hfill $\square$

\noindent{\it Proof of Proposition 3.}
Lemme 8 and 9 show that the second term in the Duhamel formula:
$$u(t)=e^{it\Delta}u_0-i\int_0^te^{i(t-\tau)\Delta}(|u(\tau)|^2)u(\tau)d\tau$$
is a contraction in $X^{s, b}_T$ and for $T$ sufficiently small has norm less
than $1$. Moreover this norm only depends on $b$, $b'$ (cf. Proposition 2.11 of [BGT2]).
Hence we have proved Proposition 3. In case $s\geq 1$, $T$ can be shown \cite{Z} to
depend only on $\|u\|_{H_1(M)}$, which is uniformly bounded in time. \hfill $\square$. 
\smallskip
\section{Bounds on Sobolev norms}
Theorem 2 follows from the following:
\begin{prop} Let $(M, g)$ be a compact Riemannian $d$-manifold. Assume (\ref{*}) holds with $0\leq s_0<1$
for any $f_1$, $f_2\in L^2(M)$ with spectra in $[-N_1, N_1]^d$, $[-N_2, N_2]^d$. 
If $u\in C(\Bbb R, H^s(M))$ is the solution to the Cauchy problem with the initial datum 
$u_0\in H^s(M)$, $s\geq 1$:
\begin{eqnarray*}
i\partial_t u + \Delta u &=& |u|^2 u \\
u(0,x)&=&u_0(x),
\end{eqnarray*}
then
$$
\|u(t)\|_{H^s} \lesssim t^{A(s-1)}
$$
for every $A$ such that
\begin{eqnarray*}
A^{-1} &<& 1-s_0,\qquad\qquad\qquad\qquad\qquad\quad\, \textrm{ if $d=2$, }\\
       &<& (1-\frac{d-2}{2(d-2s_0)})(1-\frac{d-2}{d-s_0-1}), \textrm{ if $d\geq 3$ }. \\
\end{eqnarray*}
\end{prop}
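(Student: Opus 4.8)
The plan is to run the frequency-truncation (``almost conservation law'') scheme of Bourgain \cite{Bou3,Bou4}, with the bilinear bound (\ref{*}) serving as the essential Strichartz input. Since $s\ge1$, the $L^2$ and $H^1$ norms of $u$ are conserved, so --- exactly as in the last line of the proof of Proposition 3 --- the local existence time $\tau_0=\tau_0(\|u_0\|_{H^1})>0$ is bounded below, and on every interval $I=[t_0,t_0+\tau_0]$ the contraction argument behind Lemmas \ref{Gabriel} and \ref{zoinx} gives $\|u\|_{X^{s,b}_I}\lesssim\|u(t_0)\|_{H^s}$ and $\|u\|_{X^{1,b}_I}\lesssim\|u(t_0)\|_{H^1}\lesssim1$, all constants depending only on $\|u_0\|_{H^1}$; here $b>1/2$ is fixed close enough to $1/2$ to be admissible at both regularities $s$ and $1$. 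I then fix a dyadic $N\ge1$ (the spectral projection threshold), to be optimised only at the very end, and split $u=P_{\le N}u+P_{>N}u=:v+w$. Because $s\ge1$ one has $\|v(t)\|_{H^s}\le N^{s-1}\|u(t)\|_{H^1}\lesssim N^{s-1}$, while $\|u\|_{H^s}^2=\|v\|_{H^s}^2+\|w\|_{H^s}^2$, so the only thing left to control is the growth of $\|w(t)\|_{H^s}$.

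Applying $P_{>N}$ to the Duhamel formula on $I$ and using Lemma \ref{Gabriel} I obtain
$$
\|w(t_0+\tau_0)\|_{H^s}\ \le\ \|w(t_0)\|_{H^s}+C\,\tau_0^{\,1-b-b'}\,\big\|P_{>N}(|u|^2u)\big\|_{X^{s,-b'}_I}.
$$
The heart of the proof --- and the step I expect to be the only delicate one --- is the gained nonlinear estimate
$$
\big\|P_{>N}(|u|^2u)\big\|_{X^{s,-b'}_I}\ \lesssim\ N^{-\theta_0}\big(\|w\|_{X^{s,b}_I}+N^{s-1}\big),
$$
with implicit constant depending only on $\|u_0\|_{H^1}$, valid for every $\theta_0<1-s_0$ when $d=2$ and every $\theta_0<\big(1-\tfrac{d-2}{2(d-2s_0)}\big)\big(1-\tfrac{d-2}{d-s_0-1}\big)$ when $d\ge3$. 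To prove it I would Littlewood--Paley decompose the three factors (and the $X^{-s,b'}$ test function supplied by duality): because the output frequency exceeds $N$, at least one factor must sit at frequency $\gtrsim N$, and I would pair that factor with one of the others and apply the bilinear estimate (\ref{*}), lifted to $X^{s,b}$ by the machinery of Section 3 (Lemmas \ref{azerty} and \ref{L}). Trading one full derivative off the high-frequency factor against the $\min(\cdot)^{s_0}$ loss in (\ref{*}) is exactly what yields the factor $N^{-(1-s_0)}$ in dimension $2$. In dimension $\ge3$ the two remaining factors can no longer be measured in $H^1$ alone, so I would interpolate the bilinear estimate against the crude energy bound of Lemma \ref{azerty}; the extra loss is governed by two independent optimisations --- one over how the $H^s$-derivative is shared between the high- and the low-frequency factors, one over the interpolation parameter --- and these produce the product expression for $\theta_0$, which is strictly smaller than $1-s_0$.

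Granting this estimate, the rest is an ODE iteration. Using $\|w\|_{X^{s,b}_I}\lesssim\|u(t_0)\|_{H^s}\le\|w(t_0)\|_{H^s}+CN^{s-1}$ gives the one-step inequality
$$
\|w(t_0+\tau_0)\|_{H^s}\ \le\ (1+CN^{-\theta_0})\,\|w(t_0)\|_{H^s}+C\,N^{\,s-1-\theta_0}.
$$
For a target time $T\ge1$, iterate this $n=\lceil T/\tau_0\rceil$ times from $t_0=0$ and choose the dyadic $N$ with $N\sim T^{1/\theta_0}$, so that $nN^{-\theta_0}\lesssim1$ and hence the accumulated growth factor $(1+CN^{-\theta_0})^n$ stays $O(1)$; summing the geometric series then yields
$$
\|w(T)\|_{H^s}\ \lesssim\ \|w(0)\|_{H^s}+N^{s-1}\ \lesssim\ \|u_0\|_{H^s}+T^{(s-1)/\theta_0}.
$$
Combined with $\|v(T)\|_{H^s}\lesssim N^{s-1}\sim T^{(s-1)/\theta_0}$ (and with the trivial bound from continuity of the flow for $T\le1$) this gives $\|u(T)\|_{H^s}\lesssim T^{(s-1)/\theta_0}$, i.e. $A=1/\theta_0$ is admissible. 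Since $\theta_0$ may be taken arbitrarily close to $1-s_0$ when $d=2$, resp. to $\big(1-\tfrac{d-2}{2(d-2s_0)}\big)\big(1-\tfrac{d-2}{d-s_0-1}\big)$ when $d\ge3$, this is precisely the stated range of $A$.
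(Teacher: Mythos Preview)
Your iteration scheme is set up correctly, but the key nonlinear estimate
\[
\big\|P_{>N}(|u|^2u)\big\|_{X^{s,-b'}_I}\ \lesssim\ N^{-\theta_0}\big(\|w\|_{X^{s,b}_I}+N^{s-1}\big)
\]
is false as stated, and this is fatal for the argument. Write $u=v+w$ and look at the ``low--low--high'' contribution $P_{>N}(|v|^2w)$ with $v$ localised at frequency $O(1)$ and $w$ at a dyadic block $N_3\gg N$. Running your own Littlewood--Paley/bilinear bookkeeping (Lemma~\ref{L} together with $\|v\|_{X^{1,b}}\lesssim1$) gives only
\[
\big\|P_{>N}(|v|^2w)\big\|_{X^{s,-b'}_I}\ \lesssim\ \|v\|_{X^{1,b}_I}^2\,\|w\|_{X^{s,b}_I}\ \lesssim\ \|w\|_{X^{s,b}_I},
\]
with no factor $N^{-\theta_0}$: the two smallest frequencies in $m(N_1,\dots,N_4)$ are the two copies of $v$, which carry no $N$ at all. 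Feeding this into your one-step bound yields $\|w(t_0+\tau_0)\|_{H^s}\le(1+C)\|w(t_0)\|_{H^s}+\dots$, and after $\sim T$ iterations you get exponential, not polynomial, growth.

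The paper's proof (following \cite{Bou3,Z}) does \emph{not} split $u$ and does \emph{not} estimate the full Duhamel forcing. Instead it differentiates $\|\Delta^r u(t)\|_{L^2}^2$ in time, obtaining $2\,\mathrm{Im}\int_M \Delta^r(|u|^2u)\,\Delta^r\bar u$. Expanding $\Delta^r(|u|^2u)$ by Leibniz, the term in which all $2r$ derivatives hit a single factor of $u$ gives $\int_M|\Delta^r u|^2|u|^2$, which is \emph{real} and therefore drops out under $\mathrm{Im}$. This is precisely the analogue of your low--low--high term, and the cancellation is the whole point: every surviving term has at least one derivative on each of two factors, which is what produces the extra smallness and the increment bound $\|u(t_{j+1})\|_{H^s}^2-\|u(t_j)\|_{H^s}^2\lesssim\|u(t_j)\|_{H^s}^{2-\alpha}$ (with $\alpha=2c/(s-1)$) that integrates to the polynomial rate. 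Your Duhamel-in-$H^s$ framework discards this cancellation; to salvage the approach you would have to track $\|w(t)\|_{H^s}^2$ and exploit the same real-part structure, at which point the frequency cutoff $P_{>N}$ is no longer doing any work and you are back to the paper's argument.
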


For completeness, we present the proof of the proposition, which is adapted from the proof 
in \cite{Z}, see also \cite{Bou3}.

\begin{proof}
Suppose $s$ is an even integer written as $2r$. This is no restriction as once the proposition is proven for every even integer, by interpolation, the result holds for every $s>1$. From Proposition 3, the local 
existence time $T$ only depends on the $H^1$ norm. By elementary iteration process, the solution
is therefore global. 

Let $t_j= j *T/2$.
Since $L^2$ norm is conserved, one only needs to majorize $\| \Delta^r u \|_{L^2}$.
Hence :
$$
\| \Delta^r u (t_{j+1}) \|^2_{L^2} - \| \Delta^r u(t_j) \|^2_{L^2} =
\int_{[t_j,t_{j+1}]} \partial_t \| \Delta^r u \|_{L^2} \ud t
$$
and
\begin{eqnarray*} 
\partial_t \| \Delta^r u \|_{L^2} 
&=& 2 \Re \int_M \partial_t \Delta^r u \Delta^r \overline{u} \ud x \\  
&=& 2 \Re( \int_M \Delta^r (-i|u|^2u+i\Delta u) \Delta^r \overline{u} \ud x) \\
&=& 2 \Im( \int_M \Delta^r (\overline{u} u^2) \Delta^r \overline{u} \ud x), \\
\end{eqnarray*}
since $\Im (\int_M \Delta (\Delta^r u) \Delta^r
\overline{u}\ud x)=0$. 

Computing $\Delta^r (\overline{u} u^2)$, there are three types of terms.
The first type is the one where all the derivatives from $\Delta$ act on $u$:
$$
\int_M |\Delta^r u|^2 |u|^2 \ud x,
$$
which is real and so makes no contribution.
The second type is  the one where all the derivatives act on $\overline{u}$:
$$
\int_M (\Delta^r \overline{u})^2 (u)^2 \ud x, 
$$
which is the most difficult to deal with. In all other terms, the derivatives do not act on the same factor.
They are of the form:
$$
\int_M \partial_x^{\alpha_1} \overline{u} \partial_x^{\alpha_2} u 
\partial_x^{\alpha_3} u \Delta^r \overline{u} \ud x,
$$
where the integers $\alpha_j$ are such that  their sums equal to $2r=s$ and 
at most one of them is zero.

We first deal with the third type of terms where no $\alpha_j$ is zero.
The estimates from Lemma 9 are crucial.

\begin{eqnarray*}
\int_{[t_j,t_{j+1}]} \int_M \partial_x^{\alpha_1} \overline{u}
\partial_x^{\alpha_2} u \partial_x^{\alpha_3} u \Delta^r \overline{u}
\ud x \ud t
&\lesssim& \|\Delta^r \overline{u} \|_{X^{-s_0 -\varepsilon,b}} 
\|\partial_x^{\alpha_1} \overline{u} \partial_x^{\alpha_2} u
\partial_x^{\alpha_3} u\|_{X^{s_0 +  \varepsilon,-b}} \\
&\lesssim& \|\Delta^r u\|_{X^{-s_0 -\varepsilon,b}}
\|\partial_x^{\alpha_1}u\|_{X^{s_0 + \varepsilon,b}}
\|\partial_x^{\alpha_2}u\|_{X^{s_0 + \varepsilon,b}}
\|\partial_x^{\alpha_3}u\|_{X^{s_0 + \varepsilon,b}} \\ 
&\lesssim& \| u \|_{X^{s-s_0 -\varepsilon,b}}
\|u\|_{X^{\alpha_1+s_0+\varepsilon}} \|u\|_{X^{\alpha_2+s_0+\varepsilon}} 
\|u\|_{X^{\alpha_3+s_0+\varepsilon}} 
\end{eqnarray*}

Writing $X^{s-s_0 -\varepsilon,b}$ as the interpolate between $(X^{s,b} , 1 - \frac{s_0+\varepsilon}{s-1})$ and $(X^{1,b} ,\frac{s_0+\varepsilon}{s-1})$, one has:
$$
\| u \|_{X^{s-s_0 -\varepsilon,b}} \lesssim \|u\|_{X^{s,b}}^{1 -
  \frac{s_0+\varepsilon}{s-1}} \|u\|_{X^{1,b}}^{\frac{s_0+\varepsilon}{s-1}}
\lesssim \|u(t_j)\|^{1 - \frac{s_0+\varepsilon}{s-1}}_{H^s}. 
$$
Since each $\alpha_j$ is at least $1$,
$X^{\alpha_j+s_0+\varepsilon,b}$ is the interpolate 
between
$(X^{s,b} ,\frac{s_0+\varepsilon+\alpha_j-1}{s-1})$
and $(X^{1,b} , \frac{s-\alpha_j-s_0-\varepsilon}{s-1})$:
$$
\|u\|_{X^{\alpha_j+s_0+\varepsilon}} \lesssim
\|u\|_{H^s}^{\frac{s_0+\varepsilon+\alpha_j-1}{s-1}}. 
$$
The contribution of these terms are at most
$|u\|_{H^s}^{1+2 \frac{s_0+\varepsilon}{s-1} + \frac{s-3}{s-1}}$ 
i.e., $|u\|_{H^s}^{2-2\frac{1-s_0-\varepsilon}{s-1}}.$
The same computation gives a contribution of
$|u\|_{H^s}^{2-\frac{1-s_0-\varepsilon}{s-1}}$ when one $\alpha_j$ is allowed to be zero.

Returning to the term:
$$
\int_{M\times [0,T]} (\Delta^r \bar u)^2 u^2 \ud x \ud t , 
$$
let us write $u_0=\Delta^r u=u_1$, $u_2=u=u_3$ and prove
\begin{equation}
J = \int_{M\times [0,T]} u_0 u_1 u_2 u_3 \ud x \ud t \lesssim
\|u_0\|_{X^{0,b}} \|u_1\|_{X^{-c,b}} \|u_2\|_{X^{1,b}} \|u_3\|_{X^{1,b}} \label{dagdag}
\end{equation}
for some $c$ to be determined. 

Let $P_{\lambda}$ be the orthogonal projection onto the eigenspace with 
eigenvalue $\lambda^2$ as before and decompose
$u_j= \sum_{N_j} u_j^{N_j} = \sum_{N_j} \sum_{N_j \leq \lambda \leq
  2N_j} P_\lambda u_j$  
with $N_j$ being dyadic integers. Then,
$$
|J| \leq \sum_N J(N) = \sum_N |\int_{M\times [0,T]} u_0^{N_0}
u_1^{N_1} u_2^{N_2} u_3^{N_3} \ud x \ud t|.
$$

Since terms such that $N_0 \geq N_1+N_2+N_3$ are taken care of by Lemma \ref{zoin}, 
we suppose that $N_0 \leq N_1+N_2+N_3$. 
Let $0< \delta<1 $, $\varepsilon$ and $b>1/2$. They will be chosen later, $\varepsilon$ is only meant to be as small as desired so $2 \varepsilon$ will be writen as $\varepsilon$ to avoid useless computation. 
Let us first study the terms where $N_1^\delta \leq N_2$, (the terms where  
$N_1^\delta \leq N_3$ are similar).
\begin{eqnarray*}
J(N) 
&\leq& \|u_0 u_2\|_{L^2(M \times [0,T])} \|u_1 u_3\|_{L^2(M \times
  [0,T])} \\
&\leq& \min(N_0,N_2)^{s_0+\varepsilon}  \min(N_1,N_3)^{s_0} \|u_0\|_{X^{0,b}}
\|u_1\|_{X^{0,b}} \|u_2\|_{X^{0,b}} \|u_3\|_{X^{0,b}} \\ 
&\leq& (N_0 N_1 N_2 N_3)^{-\varepsilon} (N_0 N_1)^\varepsilon (N_2
N_3)^{s_0-1+\varepsilon} \|u_0\|_{X^{0,b}} \|u_1\|_{X^{0,b}}
\|u_2\|_{X^{1,b}} \|u_3\|_{X^{1,b}}, \\ 
\end{eqnarray*}
where $s_0-1+\epsilon<0$ if $\varepsilon$ is small enough, $N_1^\delta \leq N_3$, $N_0 \leq N_1+N_2+N_3$ and of course, $N_1 \leq N_1+N_2+N_3$. So,
\begin{eqnarray*}
J(N) 
&\leq& (N_0 N_1 N_2 N_3)^{-\varepsilon} N_1^\varepsilon
N_3^{s_0-1+\varepsilon} \|u_0\|_{X^{0,b}} \|u_1\|_{X^{0,b}}
\|u_2\|_{X^{1,b}} \|u_3\|_{X^{1,b}} \\   
&\leq& (N_0 N_1 N_2 N_3)^{-\varepsilon} N_1^{\delta (s_0-1) + \varepsilon}
\|u_1\|_{X^{0,b}} \|u_2\|_{X^{1,b}} \|u_3\|_{X^{1,b}} \\   
&\leq& (N_0 N_1 N_2 N_3)^{-\varepsilon} \|u_0\|_{X^{0,b}}
\|u_1\|_{X^{\delta(s_0-1),b}} \|u_2\|_{X^{1,b}} \|u_3\|_{X^{1,b}}, \\  
\end{eqnarray*}
which proves (\ref{dagdag}) in this case with $c=\delta(s_0-1)$.

For the terms where $N_1^\delta \geq N_2$ et $N_1^\delta
\geq N_3$, more decomposition is needed. Let
$$
u_j^{N_j,L_j}= \frac{1}{2\pi} \sum_{N_j \leq \lambda \leq
  2N_j} \int_{\tau+\lambda}e^{it\tau} \widehat{P_\lambda{u}_j}(\tau)d\tau.  
$$
(We remark that this decomposition, the $X^{s, b}$ spaces and norms are inspired by the same
 intuition.)
Hence,
\begin{eqnarray*}
J(N) &\leq& \sum_L |\int_{\R \times M} u_0^{N_0,J_0} u_1^{N_1,J_1}
u_2^{N_2,J_2} u_3^{N_3,J_3} \ud x \ud t| \\
&\leq& \sum_L |\int_{\{\tau_0+\tau_1+\tau_2+\tau_3=0\} \times M}
\hat{u}_0^{N_0,L_0} \hat{u}_1^{N_1,L_1} \hat{u}_2^{N_2,L_2} \hat{u}_3^{N_3,L_3}
\ud x \ud \tau| \\ 
\end{eqnarray*}
For each term of this sum, we will decompose the integral depending on whether 
 $|\tau_2| \leq 1/4 N_1^2$ and $|\tau_3| \leq  1/4 N_1^2$ or if one of these statements fails.

In order to deal with the first case, notice that :
\begin{eqnarray*}
|\tau_0 + \lambda_0|+|\tau_1+ \lambda_1|
&\geq& |\tau_0+\tau_1+\lambda_0+\lambda_1| \\
&\geq& |\lambda_0+\lambda_1| - |\tau_0+\tau_1|\\
&\geq& \lambda_1 - |\tau_2 + \tau_3| \\
&\geq& 1/2 N_1^2 \\
\end{eqnarray*}

Hence, if $L_0+L_1 \geq 1/2 N_1^2$ does not hold, the term will be zero.
Then $L_0L_1 \geq 1/2 N_1^2-1$.
So the term can be bounded by :
\begin{eqnarray*}
J(N,L)  &\lesssim& \|u_0^{N_0,L_0}\|_{L^4_t L^2_x}
\|u_1^{N_1,L_1}\|_{L^4_t L^2_x} 
\|u_2^{N_2,L_2}\|_{L^2_t L^\infty_x} \|u_3^{N_3,L_3}\|_{L^2_t L^\infty_x} \\
&\lesssim& (N_2 N_3)^{d/2} \Pi_j \|u_j^{N_j,L_j}\|_{L^4_t L^2_x} \\ 
&\lesssim& (N_2 N_3)^{d/2} \Pi_j \|u_j^{N_j,L_j}\|_{X^{0,1/4}} \\
&\lesssim& \frac{(N_2 N_3)^{d/2-1}}{(L_0 L_1 L_2 L_3)^{b-1/4}}
\|u_0\|_{X^{0,b}} \|u_1\|_{X^{0,b}}  \|u_2\|_{X^{1,b}} \|u_3\|_{X^{1,b}}  
\end{eqnarray*}

Summing over $L$, we obtain
\begin{eqnarray*}
&\quad&\sum_L J(N,L)\\ 
&\lesssim& \sum_{L_0,L_1} \frac{(N_2 N_3)^{d/2-1}}{(L_0L_1)^{b-1/4}}
\|u_0\|_{X^{0,b}} \|u_1\|_{X^{0,b}} \|u_2\|_{X^{1,b}} \|u_3\|_{X^{1,b}} \\
&\lesssim& \sum_{L_0,L_1} \frac{N_1^{2(1/4-b+\varepsilon)} (N_2
  N_3)^{d/2-1}}{(L_0L_1)^{\varepsilon}}  \|u_0\|_{X^{0,b}}
\|u_1\|_{X^{0,b}} \|u_2\|_{X^{1,b}} \|u_3\|_{X^{1,b}} \\
&\lesssim& N_1^{-2(b-1/4-\varepsilon)} (N_2 N_3)^{d/2-1} \|u_0\|_{X^{0,b}}
\|u_1\|_{X^{0,b}} \|u_2\|_{X^{1,b}} \|u_3\|_{X^{1,b}} \\
&\lesssim& (N_0 N_1 N_2 N_3)^{-\varepsilon} N_0^\varepsilon
N_1^{-2(b-1/4)} N_2^{d/2-1+\varepsilon} N_3^{d/2-1+\varepsilon}
\|u_0\|_{X^{0,b}} \|u_1\|_{X^{0,b}} \|u_2\|_{X^{1,b}} \|u_3\|_{X^{1,b}} \\
&\lesssim& (N_0 N_1 N_2 N_3)^{-\varepsilon} N_1^{-2(b-1/4) +
  \varepsilon (2+2\delta) + \delta (d-2)} 
\|u_0\|_{X^{0,b}} \|u_1\|_{X^{0,b}} \|u_2\|_{X^{1,b}} \|u_3\|_{X^{1,b}} \\
&\lesssim&  (N_0 N_1 N_2 N_3)^{-\varepsilon} \|u_0\|_{X^{0,b}}
\|u_1\|_{X^{c,b}}  \|u_2\|_{X^{1,b}} \|u_3\|_{X^{1,b}}, \\
\end{eqnarray*}
where $c'={-2(b-1/4) +  \varepsilon (2+2\delta) + \delta (d-2)}$. 

Finally in the second case $|\tau_2| > 1/4 N_1²$ (or $|\tau_3| > 1/4 N_1²$).
Notice that
$$
|\tau_2 + \lambda_2| > \tau_2 - \lambda_2 > 1/2 N_1².
$$
Then unless $L_2 > N_1²$, the term is zero. 
So computing as previously :
$$
J(N,L) \lesssim 
\frac{(N_2 N_3)^{d/2-1}}{(L_0 L_1 L_2 L_3)^{b-1/4}}
\|u_0\|_{X^{0,b}} \|u_1\|_{X^{0,b}}  \|u_2\|_{X^{1,b}} \|u_3\|_{X^{1,b}}. 
$$

Summing over $L$:
\begin{eqnarray*}
&\quad&\sum_L J(N,L)\\
&\lesssim& \sum_{L_2} (N_2 N_3)^{d/2-1} L_2^{-(b-1/4)} \|u_0\|_{X^{0,b}}
\|u_1\|_{X^{0,b}}  \|u_2\|_{X^{1,b}} \|u_3\|_{X^{1,b}} \\
&\lesssim& \sum_{L_2} (N_0 N_1 N_2 N_3)^{-\varepsilon} N_0^\varepsilon
N_1^{\varepsilon - 2(b-1/4)} (N_2 N_3)^{d/2-1 + \varepsilon}
L_2^{-\varepsilon} \|u_0\|_{X^{0,b}} \|u_1\|_{X^{0,b}}  \|u_2\|_{X^{1,b}}
\|u_3\|_{X^{1,b}} \\
&\lesssim& (N_0 N_1 N_2 N_3)^{-\varepsilon} N_1^{-2(b-1/4) +\varepsilon +2\delta(d/2-2)}
\|u_0\|_{X^{0,b}} \|u_1\|_{X^{0,b}}  \|u_2\|_{X^{1,b}} \|u_3\|_{X^{1,b}} \\
&\lesssim& (N_0 N_1 N_2 N_3)^{-\varepsilon} \|u_0\|_{X^{0,b}}
\|u_1\|_{X^{c'',b}}  \|u_2\|_{X^{1,b}} \|u_3\|_{X^{1,b}}, \\ 
\end{eqnarray*}
where $c''= -2(b-1/4) +\varepsilon +2\delta(d/2-1)$.

Recall we have just proved that
$
\| \Delta^r u (t_{j+1}) \|^2_{L^2} - \| \Delta^r u(t_j) \|^2_{L^2} 
$
can be written as the sum of terms which are respectively bounded by
$\|u\|_{H^s}^{2-2\frac{1-s_0-\varepsilon}{s-1}}$, $\|u\|_{H^s}^{2-1\frac{1-s_0-\varepsilon}{s-1}}$, $\|u\|_{H^s} \|u\|_{H^{s-c}}$,$\|u\|_{H^s} \|u\|_{H^{s-c'}}$ or $\|u\|_{H^s} \|u\|_{H^{s-c}}$, since $\|u\|_{H^1}$ is bounded with $c= \delta(s_0-1)$, $c'= c''=  -2(b-1/4) +\varepsilon +2\delta(d/2-1) $. 
When $d=2$, $c>c'$ for any $\delta<1$. When $d\geq 3$, we
equate $c$ with $c'$ and let $\delta = \frac{2(b-1/4)}{d-s_0-1} $, then
$c=\frac{2(b-1/4)(s_0-1)}{d-s_0-1}$
By interpolation, one has
$\|u\|_{H^s} \|u\|_{H^{s-c'}} \leq |u\|_{H^s}^{2-\frac{c}{s-1}}$  
as $b$ can be choosen close to $1/2$, so $c=\frac{(1-s_0)}{2(d-s_0 )}$.

Using the above bounds and integrating the differential inequality, we obtain the proposition.
\end{proof}

\bibliography{references}

\begin{thebibliography}{BGT05}
\bibitem[Bom]{Bom} E. Bombieri, Private communications.
\bibitem[Bou1]{Bou1}J. Bourgain, Fourier transform restriction phenomena for certain lattice
subset and applications to nonlinear evolution equations I, Geom. Func. Anal. 3,107--156 (1993).  
\bibitem[Bou2]{Bou2}J. Bourgain, Exponential sums and nonlinera Schr\"odinger equations, 
Geom. Func. Anal.  3,157--178 (1993).  
\bibitem[Bou3]{Bou3}J. Bourgain, On the growth in time of higher Sobolev norms of smooth solutions of
Hamiltonian PDE, Internat. Math. Res. Notices, 277-304 (1996).
\bibitem[Bou4]{Bou4}J. Bourgain, On Strichartz inequalities and the nonlinear Schr\"odinger equation on irrational tori, in Mathematical Aspects of Nonlinear dispersive equations, 1-20, Ann. of Math. Stud., 163,
Princeton University Press, Princeton, NJ, 2007.
\bibitem[BGT1]{BGT1} N. Burq, P. G\'erard and T. Tzvetkov, Strichartz inequalities and the nonlinear 
Schr\"odinger equation on compact manifolds, Amer. Jour. of Math. 126, 569-605 (2004).
\bibitem[BGT2]{BGT2} N. Burq, P. G\'erard and T. Tzvetkov, Bilinear eigenfunction estimates and the nonlinear Schr\"odinger equation on surfaces, Invent. Math. 159, 187-223 (2005).
\bibitem[CdV]{CdV}Y. Colin de Verdier, Quasi-modes sur les vari\'et\'e Riemanniennes, Invent. Math. 43, 15-52 (1977).
\bibitem[G]{G} J. Ginibre, Le probl\`eme de Cauchy pour des EDP semi-lin\'eaires p\'eriodiques
en variables d'espace (d'apr\`es Bourgain), S\'eminaire Bourbaki 1995, Ast\'erisque 237, 163-187 (1996).
\bibitem[J]{J} V. Janick, \"Uber die Gitterpunkte auf konvexen curven, Math. Z. 24, 500-518 (1926).
\bibitem[Z]{Z} S. Zhong, The growth in time of higher Sobolev norms of solutions to
Schr\"odinger equations on compact Riemannian manifolds, J. Diff. Eq. 245, 359-376 (2008).
\end{thebibliography}

\end{document}